\theoremstyle{plain}
\newtheorem{Theorem}{Theorem}[section]
\newtheorem{Lemma}[Theorem]{Lemma}
\newtheorem{Fact}[Theorem]{Fact}
\newtheorem{Proposition}[Theorem]{Proposition}
\theoremstyle{definition}
\newtheorem{Definition}[Theorem]{Definition}
\theoremstyle{remark}
\newtheorem{Remark}[Theorem]{Remark}
\author{Arno Fehm}
\address{Fachbereich Mathematik und Statistik, University of 
Konstanz, 78457 Konstanz, Germany}
\email{arno.fehm@uni-konstanz.de}
\author{Franziska Jahnke}
\address{Institut f\"ur Mathematische Logik und Grundlagenforschung,
University of M\"unster, Einsteinstr.\;62, 48149 M\"unster, Germany}
\email{franziska.jahnke@wwu.de} 
\newcommand{\rk}{{\rm rk}}
\begin{document}

\title{Fields with almost small absolute Galois group}

\begin{abstract}
We construct and study fields $F$ with the property that $F$ has infinitely many extensions of some fixed degree, 
but $E^\times/(E^\times)^n$ is finite for every finite extension $E/F$ and every $n\in\mathbb{N}$.
\end{abstract}

\maketitle

\section{Introduction}

\noindent
We study the following closely related algebraic conditions on a field $F$:

\begin{enumerate}
\item[$\mathrm{(F1)}$] For every $n\in\mathbb{N}$, the field $F$ has only finitely many extensions of degree $n$
(sometimes referred to as {\em $F$ is bounded}).
\item[$\mathrm{(F2)}$] For every $n\in\mathbb{N}$ and every finite extension $E$ of $F$, the subgroup of $n$-th powers $(E^\times)^n$ 
has finite index in the multiplicative group $E^\times$.
\end{enumerate}
Both conditions appear already in Serre's {\em Cohomologie Galoisienne} \cite[Ch.~III \S4]{Serre}
and recently acquired importance in the model theory of fields:
For example, it is known that 
every supersimple field satisfies $\mathrm{(F1)}$ (Pillay-Poizat),
and for perfect pseudo-algebraically closed fields also the converse holds (Hrushovski).
Condition $\mathrm{(F2)}$ is satisfied by every superrosy field and also by every strongly$^2$-dependent field,
and it appears in a conjecture of Shelah-Hasson on definable valuations in NIP fields,
as well as in related results by Krupi\'nski.
For details on all of this see \cite{Krupinski}, \cite[Cor.~2.7]{KaplanShelah}.

It is well-known (we recall this in Proposition \ref{prop:F1F2} below) that
 $\mathrm{(F1)}$ 
implies  $\mathrm{(F2)}$, it was however previously not known 
whether the converse holds.
For example, finite or pseudo-finite fields and local fields like $\mathbb{R}$ and $\mathbb{Q}_p$ are known to satisfy both $\mathrm{(F1)}$ and $\mathrm{(F2)}$,
while global fields like $\mathbb{Q}$ or $\mathbb{F}_q(t)$ satisfy neither of them.
Similarly, while $\mathrm{(F1)}$ is clearly preserved under elementary equivalence of fields,
it was not known whether $\mathrm{(F2)}$ is.

We answer both questions negatively:

\begin{Theorem}\label{thm:intro1}
If a field $F$ satisfies $\mathrm{(F2)}$ and $F^*$ is a field elementarily equivalent to $F$,
then $F^*$ need not satisfy $\mathrm{(F2)}$.
\end{Theorem}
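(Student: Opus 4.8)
The plan is to take $F^*$ to be an ultrapower of a carefully chosen field $F$, so that all the work goes into the choice of $F$. Concretely, I would first construct a field $F$ satisfying $\mathrm{(F2)}$ for which the finiteness in $\mathrm{(F2)}$ is \emph{not uniform} in the finite extensions: there should be a fixed degree $d$, a fixed exponent $m$, and pairwise non-isomorphic (separable) extensions $K_1, K_2, \dots$ of $F$ with $[K_j:F]=d$ and $\lim_{j\to\infty}\bigl|K_j^\times/(K_j^\times)^m\bigr|=\infty$. Each group $K_j^\times/(K_j^\times)^m$ is finite because $F$ satisfies $\mathrm{(F2)}$; the point is to make their orders unbounded. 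Such an $F$ automatically has infinitely many extensions of degree $d$, so it fails $\mathrm{(F1)}$; but not every field satisfying $\mathrm{(F2)}$ and failing $\mathrm{(F1)}$ works, since for ``tame'' henselian examples the orders $\bigl|K^\times/(K^\times)^m\bigr|$ stay bounded over degree-$d$ extensions $K$ --- so the non-uniformity has to be engineered.

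Granting such an $F$, fix a non-principal ultrafilter $\mathcal U$ on $\mathbb N$ and set $F^*:=F^{\mathbb N}/\mathcal U$; by {\L}o\'s's theorem $F^*\equiv F$. For each $j$ let $f_j\in F[x]$ be the minimal polynomial of a primitive element of $K_j/F$, so $\deg f_j=d$, and put $f:=(f_j)_j/\mathcal U\in F^*[x]$. Then $\deg f=d$, and $f$ is irreducible over $F^*$: a nontrivial factorization $f=gh$ would, after writing $g=(g_j)_j/\mathcal U$ and $h=(h_j)_j/\mathcal U$ with $\deg g_j$ and $\deg h_j$ constant on a set in $\mathcal U$, give $f_j=g_jh_j$ for $\mathcal U$-many $j$, contradicting the irreducibility of the $f_j$. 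Hence $E:=F^*[x]/(f)$ is a field extension of $F^*$ of degree $d$.

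Next I would identify $E$ with the ultraproduct of the $K_j$: since $E$ is $d$-dimensional over $F^*$ and is generated by the class of $x$, which is a root of $f$, the natural $F^*$-algebra homomorphism $\bigl(\prod_j F[x]/(f_j)\bigr)/\mathcal U=\bigl(\prod_j K_j\bigr)/\mathcal U\to E$ is an isomorphism. An element of an ultraproduct of fields is invertible precisely when it is nonzero on a set in $\mathcal U$, so $E^\times\cong\bigl(\prod_j K_j^\times\bigr)/\mathcal U$, and passing to the quotient by $m$-th powers componentwise yields $E^\times/(E^\times)^m\cong\bigl(\prod_j K_j^\times/(K_j^\times)^m\bigr)/\mathcal U$. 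By {\L}o\'s's theorem this is an ultraproduct of finite groups of orders tending to infinity, hence infinite. Thus $E$ is a finite extension of $F^*$ with $E^\times/(E^\times)^m$ infinite, so $F^*$ fails $\mathrm{(F2)}$ even though $F^*\equiv F$ and $F$ satisfies $\mathrm{(F2)}$; this proves Theorem~\ref{thm:intro1}.

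The genuinely hard part is the construction demanded in the first paragraph. In fact the argument above is essentially reversible: $\mathrm{(F2)}$ is preserved under elementary equivalence if and only if, for every $d$ and $m$, the order $\bigl|E^\times/(E^\times)^m\bigr|$ is bounded over all finite extensions $E$ of degree $d$; so Theorem~\ref{thm:intro1} is equivalent to producing an $\mathrm{(F2)}$-field for which this uniform bound fails, and it does not follow formally from the mere existence of an $\mathrm{(F2)}$-field that fails $\mathrm{(F1)}$. I expect this to be achieved by an iterated construction: one starts from a field with infinitely many extensions of some fixed degree and, in successive stages, adjoins enough roots to force $\mathrm{(F2)}$ in the limit while arranging that a prescribed sequence of degree-$d$ subextensions keeps acquiring new $m$-th power classes; the delicate point will be to check that the root adjunctions imposed to secure $\mathrm{(F2)}$ do not also kill those classes.
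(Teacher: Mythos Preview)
Your reduction is right and matches the paper: you correctly identify that Theorem~\ref{thm:intro1} is equivalent to exhibiting a field $F$ satisfying $\mathrm{(F2)}$ for which the bounds $|E^\times/(E^\times)^m|$ are not uniform over the degree-$d$ extensions $E$ of $F$, and your ultrapower argument (equivalently, the paper's use of an $\aleph_0$-saturated elementary extension in Proposition~\ref{prop:F2*G2*}) cleanly handles the passage from non-uniformity to an $F^*\equiv F$ failing $\mathrm{(F2)}$. That part is fine.

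The gap is the construction of $F$, which you explicitly leave open. Your suggested ``iterated adjunction of roots'' is only a hope, not a proof, and the tension you yourself flag --- adjoining roots to force $\mathrm{(F2)}$ tends to kill the very $m$-th power classes you want to grow --- is real and is not resolved by anything you wrote. The paper takes a completely different route that sidesteps this difficulty: it works on the Galois side. For fields containing $\mu_\infty$, Kummer theory translates $\mathrm{(F2)}$ into a property $\mathrm{(G2)}$ of $G_F$ (finitely many cyclic quotients of each open subgroup) and the uniform version into $\mathrm{(G2^*)}$. The paper then builds a profinite group $G$ satisfying $\mathrm{(G2)}$ but not $\mathrm{(G2^*)}$ by taking the product over all \emph{perfect} finite extensions of a fixed non-abelian simple group $S$ by $C_p^k$ as $k$ ranges over $\mathbb{N}$ (Lemma~\ref{lem:perfectextensions} produces such groups via commutator subgroups of wreath products $C_p^{k_0}\wr S$; perfection gives $\mathrm{(G2)}$ by Lemma~\ref{lem:perfectproduct}, while the growing $p$-rank of the index-$|S|$ subgroup kills $\mathrm{(G2^*)}$). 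To realize this as an absolute Galois group one passes to the universal Frattini cover $\tilde G$ --- the key technical Lemma~\ref{lem:explicit} shows this preserves $\mathrm{(G2)}$ and $\mathrm{(G2^*)}$ --- and then invokes Lubotzky--van den Dries to get a PAC field $F\supseteq\mathbb{C}$ with $G_F\cong\tilde G$. None of this is visible in your sketch; the construction is the theorem, and your proposal does not supply it.
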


\begin{Theorem}\label{thm:intro2}
Even if all fields elementarily equivalent to $F$ satisfy
 $\mathrm{(F2)}$, $F$ need not satisfy $\mathrm{(F1)}$.
\end{Theorem}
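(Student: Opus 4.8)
The plan is to exhibit, for some prime $p$, a \emph{perfect} field $F$ of characteristic $p$ whose absolute Galois group $G_F=\mathrm{Gal}(F^{\mathrm{sep}}/F)$ is a pro-$p$ group that is not topologically finitely generated. Such an $F$ does everything at once: on the one hand, a pro-$p$ group that is not finitely generated has infinitely many open subgroups of index $p$ (equivalently $\dim_{\mathbb{F}_p}H^1(G_F,\mathbb{F}_p)=\infty$, equivalently, by Artin--Schreier theory, $F/\wp(F)$ with $\wp(x)=x^p-x$ is infinite-dimensional over $\mathbb{F}_p$), and since $F$ is perfect these correspond to pairwise distinct degree-$p$ subfields of $F^{\mathrm{sep}}$, so $F$ has infinitely many extensions of degree $p$ and thus fails $\mathrm{(F1)}$; on the other hand, I will show that $F$ --- and every field elementarily equivalent to it --- satisfies $\mathrm{(F2)}$, in fact in the strongest conceivable form: $E^\times=(E^\times)^n$ for every finite extension $E/F$ and every $n\in\mathbb{N}$.

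To see that ``perfect with pro-$p$ absolute Galois group'' already forces this strong form of $\mathrm{(F2)}$, let $E/F$ be finite; since $F$ is perfect, $E/F$ is separable, $E$ is again perfect of characteristic $p$, and $G_E$ is an open (hence closed) subgroup of $G_F$, so $G_E$ is pro-$p$. Perfectness gives $(E^\times)^{p^k}=E^\times$ for all $k$. For a prime $\ell\neq p$, the Kummer sequence together with Hilbert's Theorem 90 yields $E^\times/(E^\times)^{\ell^k}\cong H^1(G_E,\mu_{\ell^k})$, and this vanishes because the continuous cohomology of a pro-$p$ group with coefficients in a finite module of order prime to $p$ vanishes in positive degrees. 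Writing $n$ as a product of prime powers and applying the Chinese Remainder Theorem then gives $E^\times=(E^\times)^n$. Finally, the property ``$F$ is perfect and $G_F$ is pro-$p$'' is elementary: perfectness is axiomatized by $\forall y\,\exists x\,(x^p=y)$, and $G_F$ is pro-$p$ precisely when $F$ has no separable extension of degree $m$ for any $m$ that is not a power of $p$, a condition expressed by one first-order sentence for each such $m$ (``there is no irreducible separable polynomial of degree $m$''). Hence every $F^*\equiv F$ is again perfect with pro-$p$ absolute Galois group, and so satisfies $\mathrm{(F2)}$ by the computation just made.

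It remains to construct the field, and this is the only step with real content. One explicit choice is the field of Puiseux series $F=\bigcup_{n\geq 1}\overline{\mathbb{F}_p}((t^{1/n}))$, which one checks directly is perfect; it is the perfect hull of the maximal tamely ramified extension of $\overline{\mathbb{F}_p}((t))$, and since passing to the perfect hull does not change the absolute Galois group, $G_F$ is isomorphic to the wild inertia subgroup of $G_{\overline{\mathbb{F}_p}((t))}$. That subgroup is pro-$p$ by construction, and it is a free pro-$p$ group of countably infinite rank --- a classical fact which follows from $\overline{\mathbb{F}_p}((t))$ being $C_1$ (Tsen--Lang), hence of cohomological dimension $\leq 1$, so that the wild inertia, being a pro-$p$ group of cohomological $p$-dimension $\leq 1$, is free pro-$p$, its rank being computed via Artin--Schreier theory over the maximal tame extension. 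In particular $G_F$ is not finitely generated. (Alternatively, avoiding local fields entirely: a free pro-$p$ group of infinite rank is projective, hence --- by the standard realization of projective profinite groups as absolute Galois groups --- occurs as $G_{F_0}$ for some field $F_0$ of characteristic $p$, which one then replaces by its perfect hull.) The main obstacle is thus isolated in this construction and in the structure theory of wild inertia; everything else is formal. What makes the theorem possible is that the failure of smallness is confined entirely to the pro-$p$ part in characteristic $p$, where perfectness renders it invisible to the condition $\mathrm{(F2)}$.
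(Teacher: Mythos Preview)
Your proof is correct and follows a genuinely different route from the paper's. The paper works entirely in characteristic zero: it takes $G=S^{\aleph_0}$ for a non-abelian finite simple group $S$, shows this satisfies $\mathrm{(G2^*)}$ but not $\mathrm{(G1)}$, passes to the projective universal Frattini cover $\tilde G$ (proving that $\mathrm{(G2^*)}$ survives this step), and then realizes $\tilde G$ as the absolute Galois group of a pseudo-algebraically closed field containing $\mu_\infty$, where the Kummer-theoretic dictionary between $\mathrm{(F2)}$ for all $F^*\equiv F$ and the group-theoretic condition $\mathrm{(G2^*)}$ applies. Your approach exploits positive characteristic instead: for a perfect field of characteristic $p$ with pro-$p$ absolute Galois group, perfectness kills the $p$-part of $E^\times/(E^\times)^n$ while the vanishing of prime-to-$p$ cohomology of pro-$p$ groups kills the rest, so $\mathrm{(F2)}$ holds in the strongest possible form $E^\times=(E^\times)^n$; since both hypotheses are first-order, this persists under elementary equivalence without any uniformity bookkeeping. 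Your argument is more elementary and yields a sharper conclusion, but it is specific to Theorem~\ref{thm:intro2} and to characteristic $p$. The paper's machinery is heavier but produces characteristic-zero (indeed PAC) examples and, more to the point, is flexible enough to also handle Theorem~\ref{thm:intro1}, where one needs $\mathrm{(G2)}$ to hold while $\mathrm{(G2^*)}$ fails --- a separation your setup cannot arrange, since in your examples $\mathrm{(F2)}$ is automatically preserved under elementary equivalence.
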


\noindent
The theorems are proven by constructing counterexamples.
These counterexamples are obtained by first translating the problem into group theory and then realizing
suitable profinite groups -- the universal Frattini cover of products over certain finite groups derived from wreath products -- as absolute Galois groups.
The fields obtained by such a construction can be chosen either pseudo-algebraically closed or henselian valued.
In the last section we have a closer look at the henselian case and relate
 $\mathrm{(F1)}$ and $\mathrm{(F2)}$ to the residue field.

\section{Translation to group theory}
\label{sec:Kummer}

\noindent
We now explain the translation of $\mathrm{(F1)}$ and $\mathrm{(F2)}$ 
into properties of the absolute Galois group $G_F$ of $F$ and recall why 
 $\mathrm{(F1)}$ implies $\mathrm{(F2)}$.
For simplicity, we will from now on always assume that 
{\em $F$ is of characteristic zero}.
Let $G$ be a profinite group and consider the following two conditions on $G$:
\begin{enumerate}\label{ax:G1}
\item[$\mathrm{(G1)}$] $G$ is a small profinite group, i.e.~for every $n\in\mathbb{N}$ there are only finitely many open subgroups $H\leq G$ 
 of index $n$.
\item[$\mathrm{(G2)}$] For every $n\in\mathbb{N}$, every open subgroup $H\leq G$ has only finitely many open normal subgroups $N\lhd H$
with $H/N$ cyclic of order $n$.
\end{enumerate}
For $\mathrm{(F1)}$ the translation follows directly from Galois correspondence:
\begin{Fact}\label{fact:F1G1}
$F$ satisfies $\mathrm{(F1)}$ if and only if $G=G_F$ satisfies 
$\mathrm{(G1)}$.
\end{Fact}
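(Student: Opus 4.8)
The plan is to read off the equivalence directly from Krull's Galois correspondence. Since $\operatorname{char}(F)=0$, a fixed algebraic closure $\bar F$ of $F$ is separable over $F$, so $\bar F/F$ is Galois with group $G_F=\operatorname{Gal}(\bar F/F)$, and the correspondence gives an inclusion-reversing bijection $H\mapsto \bar F^{H}$ between the closed subgroups of $G_F$ and the intermediate fields $F\subseteq E\subseteq\bar F$, with inverse $E\mapsto\operatorname{Gal}(\bar F/E)$. Under this bijection, open subgroups correspond exactly to finite extensions, and $[G_F:\operatorname{Gal}(\bar F/E)]=[E:F]$. First I would point out that it is precisely the \emph{open} subgroups (equivalently: closed subgroups of finite index) that occur in $(\mathrm{G1})$, so there is no issue with the fact that finite-index subgroups of a profinite group need not be open in general.

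Next I would fix $n\in\mathbb{N}$ and observe that the bijection above restricts to a bijection between the open subgroups of $G_F$ of index $n$ and the subfields $E$ of $\bar F$ with $[E:F]=n$; hence one side is finite if and only if the other is. The remaining point is to match the count of degree-$n$ subfields of $\bar F$ with the count of degree-$n$ extensions of $F$ as meant in $(\mathrm{F1})$. Here the primitive element theorem does the job: every degree-$n$ extension $E/F$ equals $F(\alpha)$ for $\alpha$ a root of a separable irreducible degree-$n$ polynomial, so $E$ has exactly $n$ distinct $F$-embeddings into $\bar F$; consequently each $F$-isomorphism class of degree-$n$ extensions is realized by at least one and at most $n$ subfields of $\bar F$. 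Thus $F$ has only finitely many degree-$n$ extensions if and only if $\bar F$ has only finitely many degree-$n$ subfields, and combining this with the previous sentence proves the Fact.

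I do not anticipate a genuine obstacle: the statement is essentially a reformulation of infinite Galois theory. The only things demanding minor care are the restriction to open subgroups just mentioned and the harmless bookkeeping between "abstract" finite extensions and subfields of a fixed algebraic closure; everything else is the standard dictionary between $G_F$ and the finite extensions of $F$.
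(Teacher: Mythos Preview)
Your proposal is correct and follows exactly the route the paper indicates: the paper does not give a detailed proof but simply remarks that the translation ``follows directly from Galois correspondence'' before stating the Fact, and your argument spells out precisely this correspondence together with the minor bookkeeping between isomorphism classes of degree-$n$ extensions and degree-$n$ subfields of a fixed $\bar F$.
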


Let $E$ be a field of characteristic zero
and let $\overline{E}$ be an algebraic closure of $E$. 
For $n\in\mathbb{N}$ 
we denote by $\mu_n\subseteq\overline{E}$ the group of $n$-th roots of unity,
and let $\mu_\infty=\bigcup_{n\in\mathbb{N}}\mu_n$.

\begin{Lemma}\label{lem:F1F2}
If $G=G_E$ is small, then $E^\times/(E^\times)^n$ is finite for any $n\in\mathbb{N}$.
\end{Lemma}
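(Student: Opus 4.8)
The plan is to deduce this from Kummer theory after two reductions; I would organise the argument as follows.

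\emph{Reduction to prime exponent.} First I would observe that it suffices to prove that $E^\times/(E^\times)^p$ is finite for every prime number $p$. Indeed, if $p_1,\dots,p_r$ are the primes dividing $n$ and $p_i^{k_i}$ is the exact power of $p_i$ dividing $n$, then $(E^\times)^n=\bigcap_{i=1}^r (E^\times)^{p_i^{k_i}}$ (for ``$\supseteq$'' one uses that $1$ is a $\mathbb{Z}$-linear combination of the pairwise coprime integers $n/p_i^{k_i}$), so $E^\times/(E^\times)^n$ embeds into $\prod_i E^\times/(E^\times)^{p_i^{k_i}}$; and each $E^\times/(E^\times)^{p^k}$ is finite once $E^\times/(E^\times)^p$ is, since the chain $E^\times\supseteq (E^\times)^p\supseteq\dots\supseteq (E^\times)^{p^k}$ has successive quotients $(E^\times)^{p^j}/(E^\times)^{p^{j+1}}$, each a homomorphic image of $E^\times/(E^\times)^p$ under $x\mapsto x^{p^j}$.

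\emph{Passage to $E(\mu_p)$ and Kummer theory.} Now fix a prime $p$ and set $E'=E(\mu_p)$. Since $E$ has characteristic zero, $E'/E$ is finite of degree $d:=[E':E]$, and $d$ divides $|\operatorname{Aut}(\mu_p)|=p-1$, so $\gcd(d,p)=1$. The group $G_{E'}$ is an open subgroup of $G_E$, hence again small: an open subgroup of index $m$ in a small profinite group has, for each $\ell$, only finitely many open subgroups of index $\ell$, as these have index $m\ell$ in the ambient group. Since $\mu_p\subseteq E'$, the group $G_{E'}$ acts trivially on $\mu_p$, and Kummer theory yields isomorphisms $E'^\times/(E'^\times)^p\cong\operatorname{Hom}(G_{E'},\mu_p)\cong\operatorname{Hom}(G_{E'},\mathbb{Z}/p\mathbb{Z})$ (continuous homomorphisms). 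The last group is finite: the kernel of any continuous homomorphism $G_{E'}\to\mathbb{Z}/p\mathbb{Z}$ is an open subgroup of index $1$ or $p$, of which there are only finitely many by smallness, and for each such subgroup fewer than $p$ homomorphisms have it as their exact kernel. Hence $E'^\times/(E'^\times)^p$ is finite.

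\emph{Descent from $E'$ to $E$.} It remains to check that the natural map $E^\times/(E^\times)^p\to E'^\times/(E'^\times)^p$ is injective, for then $|E^\times/(E^\times)^p|\le |E'^\times/(E'^\times)^p|<\infty$ and we are done. So suppose $a\in E^\times$ equals $b^p$ with $b\in E'^\times$. Applying the norm $N:=N_{E'/E}$ gives $a^d=N(a)=N(b)^p\in (E^\times)^p$, and writing $1=ud+vp$ with $u,v\in\mathbb{Z}$ (possible as $\gcd(d,p)=1$) we obtain $a=(a^d)^u\cdot (a^v)^p\in (E^\times)^p$. I expect this coprimality step — the descent of Kummer data from $E(\mu_p)$ back to $E$ — to be the only point that is not purely formal, and it is precisely why it pays to reduce to a prime exponent at the outset: for composite $n$ the degree $[E(\mu_n):E]$ need not be coprime to $n$.
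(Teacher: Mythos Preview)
Your argument is correct, and it takes a genuinely different route from the paper. The paper's proof is cohomological: it identifies $E^\times/(E^\times)^n$ with $H^1(G_E,\mu_n)$ via Hilbert~90 and then controls this group through the inflation--restriction sequence for the extension $E(\mu_n)/E$, treating all $n$ uniformly with no prime reduction. Your approach is more elementary --- nothing beyond classical Kummer theory and a norm computation --- at the price of the two preliminary reductions; the descent step from $E(\mu_p)$ to $E$ via $N_{E'/E}$ is precisely the corestriction argument (restriction followed by corestriction is multiplication by the degree) carried out by hand. The cohomological proof is shorter and handles composite $n$ directly, while yours makes explicit exactly where smallness is used (finitely many index-$p$ open subgroups) and why no information is lost in passing through $E(\mu_p)$.
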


\begin{proof}
The short exact sequence
$$
 1\rightarrow\mu_n\rightarrow\overline{E}^\times\stackrel{\cdot^n}{\rightarrow}\overline{E}^\times\rightarrow 1
$$
gives rise to the long cohomology sequence
$$
 1\rightarrow\mu_n^G\rightarrow E^\times\stackrel{\cdot^n}{\rightarrow}E^\times\rightarrow H^1(G,\mu_n)\rightarrow H^1(G,\overline{E}^\times)\rightarrow\dots
$$
Since $H^1(G,\overline{E}^\times)=1$ by Hilbert's Theorem 90 \cite[Ch.~VI Thm.~10.1]{Lang}, we conclude that $E^\times/(E^\times)^n\cong H^1(G,\mu_n)$.
Let $N=G_{E(\mu_n)}$, which is an open normal subgroup of $G$. 
The inflation-restriction sequence \cite[Ch.~XX Ex.~6]{Lang} 
$$
 1\rightarrow H^1(G/N,\mu_n^N)\stackrel{{\rm inf}}{\rightarrow} H^1(G,\mu_n)\stackrel{{\rm res}}{\rightarrow} H^1(N,\mu_n)
$$
shows that $H^1(G,\mu_n)$ is finite, as $H^1(G/N,\mu_n^H)$ is finite (since $G/N$ and $\mu_n$ are finite)
and $H^1(N,\mu_n)={\rm Hom}(N,\mu_n)$ is finite (since $N$ is small and $\mu_n$ is finite).
\end{proof}

\begin{Proposition}\label{prop:F1F2}
If $F$ satisfies $\mathrm{(F1)}$, then it satisfies $\mathrm{(F2)}$.
\end{Proposition}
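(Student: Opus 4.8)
The plan is to reduce everything to the two results just established. Assume $F$ satisfies $\mathrm{(F1)}$. By Fact~\ref{fact:F1G1} this means precisely that $G_F$ is small. Fix a finite extension $E/F$ and $n\in\mathbb{N}$; since we are in characteristic zero, Lemma~\ref{lem:F1F2} will give $E^\times/(E^\times)^n$ finite as soon as we know that $G_E$ is small. So the whole proof comes down to the observation that smallness is inherited by open subgroups.

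For that observation I would argue as follows. Let $G$ be small and let $H\leq G$ be open, say of index $d=[G:H]$. If $H'\leq H$ is open of index $n$, then $H'$ is open in $G$ of index $nd$. By smallness of $G$ there are only finitely many open subgroups of $G$ of index $nd$, hence a fortiori only finitely many open subgroups of $H$ of index $n$. Thus $H$ is small. Applying this with $G=G_F$ and $H=G_E$, which is open of index $[E:F]$, we conclude that $G_E$ is small.

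Now Lemma~\ref{lem:F1F2}, applied to the field $E$ with $G_E$ small, yields that $E^\times/(E^\times)^n$ is finite. As $E/F$ and $n$ were arbitrary, $F$ satisfies $\mathrm{(F2)}$.

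Since each step is either a direct invocation of a quoted statement or the elementary index computation above, I do not anticipate any real obstacle; the one place that needs a word of care is the transitivity-of-degree bookkeeping, namely that $G_E$ is open in $G_F$ of the expected index and that ``finitely many open subgroups of index $nd$ in $G$'' does bound ``finitely many open subgroups of index $n$ in $H$''.
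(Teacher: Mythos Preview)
Your proof is correct and follows exactly the paper's approach: the paper's proof is the one-line observation that $\mathrm{(F1)}$ implies $G_E$ is small for every finite extension $E/F$, and then invokes Lemma~\ref{lem:F1F2}. You have simply spelled out the easy index argument behind the phrase ``$G_E$ is small,'' which the paper leaves implicit.
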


\begin{proof}
Since $\mathrm{(F1)}$ 
implies that $G_E$ is small for every finite extension $E/F$, 
the claim follows from Lemma \ref{lem:F1F2}.
\end{proof}

In the case where $F$ contains all roots of unity, 
this follows more directly from the following considerations.

\begin{Lemma}\label{lem:F2G2}
Suppose that $\mu_n\subseteq E$.
Then $E^\times/(E^\times)^n$ is finite if and only if $E$ has only finitely many cyclic extensions of degree dividing $n$.
\end{Lemma}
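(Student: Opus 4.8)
The plan is to use Kummer theory, which applies precisely because $\mu_n \subseteq E$. Recall that under this hypothesis the Kummer pairing gives a perfect duality between $E^\times/(E^\times)^n$ and $\mathrm{Hom}(G_E, \mu_n)$, or more concretely: every cyclic extension of $E$ of degree dividing $n$ is of the form $E(\sqrt[n]{a})$ for some $a \in E^\times$, and $E(\sqrt[n]{a}) = E(\sqrt[n]{b})$ if and only if $a$ and $b$ generate the same subgroup of $E^\times/(E^\times)^n$. So the first step is to set up this correspondence carefully and observe that it is given by $a(E^\times)^n \mapsto E(\sqrt[n]{a})$.

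Next I would argue both implications. For the forward direction, if $E^\times/(E^\times)^n$ is finite, then it has only finitely many cyclic subgroups, hence only finitely many subfields of $E(\sqrt[n]{E^\times})/E$ arising as $E(\sqrt[n]{a})$, so only finitely many cyclic extensions of degree dividing $n$. For the converse, note that $E^\times/(E^\times)^n$ is an abelian group of exponent dividing $n$; if it were infinite, it would contain infinitely many distinct cyclic subgroups (an infinite abelian group of finite exponent is not finitely generated, and in fact has infinitely many subgroups), each giving a distinct cyclic extension of degree dividing $n$ by the injectivity half of the Kummer correspondence. That contradicts the hypothesis.

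The one technical point to get right — and the main obstacle, though a minor one — is the claim that an infinite abelian group $A$ of exponent dividing $n$ has infinitely many distinct finite cyclic subgroups. This is where $\mu_n \subseteq E$ is used again only implicitly; the group-theoretic fact itself follows since such an $A$ contains $(\mathbb{Z}/p\mathbb{Z})^{(\omega)}$ for some prime $p \mid n$ (by the structure of abelian groups of bounded exponent, or more elementarily: pick elements $a_1, a_2, \dots$ of some fixed prime order $p$ that are successively independent modulo the span of the previous ones, then the subgroups $\langle a_i \rangle$ are pairwise distinct). One then has to make sure the cyclic extensions produced are genuinely of degree dividing $n$ and pairwise distinct, which is exactly the content of the Kummer correspondence being a bijection onto the set of such extensions. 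I would state the Kummer correspondence as a citation (e.g. to Lang) rather than reprove it, and keep the counting argument brief.
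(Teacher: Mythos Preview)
Your proposal is correct and follows essentially the same approach as the paper: both arguments invoke Kummer theory to set up a bijection between cyclic subgroups of $E^\times/(E^\times)^n$ and cyclic extensions of $E$ of degree dividing $n$, and then reduce to the elementary observation that an abelian group of bounded exponent is infinite if and only if it has infinitely many cyclic subgroups. The paper states this last fact in a single line without justification, whereas you spell out why it holds; otherwise the two proofs coincide.
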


\begin{proof}
Let $B\leq E^\times$ be a subgroup containing $(E^\times)^n$, 
and denote by $E_B$ the field obtained from $E$ by adjoining $n$-th roots of all elements of $B$.
By Kummer theory, the map $B\mapsto E_B$ gives a bijection between the set of such subgroups $B$ 
and the abelian extensions of $E$ of exponent $n$, and 
if $(B:(E^\times)^n)<\infty$, then
${\rm Gal}(E_B/E)\cong B/(E^\times)^n$, cf.~\cite[Ch.~VI \S8]{Lang}.
In particular, the cyclic subgroups of $E^\times/(E^\times)^n$ correspond to cyclic extensions of $E$ of degree dividing $n$.
Since $E^\times/(E^\times)^n$ has infinitely many cyclic subgroups if and only if it is infinite,
the claim follows.
\end{proof}

\begin{Proposition}\label{prop:F2G2}
If $\mu_\infty\subseteq F$,
then $F$ satisfies $\mathrm{(F2)}$ if and only if $G=G_F$ satisfies 
$\mathrm{(G2)}$.
\end{Proposition}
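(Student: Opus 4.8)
The plan is to combine the Galois correspondence with Lemma~\ref{lem:F2G2}, exploiting that the hypothesis $\mu_\infty\subseteq F$ forces $\mu_n\subseteq E$ for every finite extension $E/F$ and every $n\in\mathbb{N}$, so that Lemma~\ref{lem:F2G2} is applicable uniformly over all such $E$. First I would set up the dictionary between the two sides. Since $F$ has characteristic zero, the open subgroups of $G=G_F$ are exactly the groups $G_E$ for $E/F$ finite; and a finite extension of such an $E$ is again a finite extension of $F$, so ranging over all finite $E/F$ really does capture all open subgroups of $G$. For $H=G_E$, taking fixed fields inside $\overline{E}$ gives a bijection between the open normal subgroups $N\lhd H$ with $H/N$ cyclic of order $n$ and the cyclic extensions $L/E$ inside $\overline{E}$ with $[L:E]=n$.

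Next I would handle the ``divides $n$'' versus ``equals $n$'' discrepancy between the two conditions: for a fixed field $E$ there are only finitely many cyclic extensions of degree dividing $n$ if and only if for each $m\mid n$ there are only finitely many cyclic extensions of degree exactly $m$ (one direction is trivial, the other is a finite union over the divisors of $n$). With this in hand both implications are immediate. For $\mathrm{(F2)}\Rightarrow\mathrm{(G2)}$: given $n$ and an open subgroup $H=G_E\leq G$, condition $\mathrm{(F2)}$ says $E^\times/(E^\times)^n$ is finite, so by Lemma~\ref{lem:F2G2} (legitimate since $\mu_n\subseteq E$) the field $E$ has only finitely many cyclic extensions of degree dividing $n$, hence only finitely many of degree exactly $n$, i.e.\ $H$ has only finitely many open normal $N$ with $H/N$ cyclic of order $n$. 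For $\mathrm{(G2)}\Rightarrow\mathrm{(F2)}$: given a finite extension $E/F$ and $n\in\mathbb{N}$, apply $\mathrm{(G2)}$ to $H=G_E$ and to each divisor $m$ of $n$ to see that $E$ has only finitely many cyclic extensions of degree dividing $n$, then invoke Lemma~\ref{lem:F2G2} once more to conclude that $E^\times/(E^\times)^n$ is finite.

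The main obstacle is essentially bookkeeping rather than mathematics: there is no deep step, only the need to translate $\mathrm{(G2)}$'s quantification over open subgroups $H$ and over the exact order $n$ into $\mathrm{(F2)}$'s quantification over finite extensions $E$ and the exponent $n$, and to make sure that Lemma~\ref{lem:F2G2} is only ever used when its hypothesis $\mu_n\subseteq E$ holds --- which is precisely what $\mu_\infty\subseteq F$ supplies.
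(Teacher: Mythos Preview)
Your proposal is correct and follows exactly the route the paper takes: the paper's proof is the single sentence ``This follows from Lemma~\ref{lem:F2G2} applied to the finite extensions $E$ of $F$,'' and you have simply written out the details implicit in that sentence. In particular, your care in reconciling ``degree dividing $n$'' (as in Lemma~\ref{lem:F2G2}) with ``order exactly $n$'' (as in $\mathrm{(G2)}$) via the finite union over divisors is a point the paper leaves to the reader.
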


\begin{proof}
This follows from Lemma \ref{lem:F2G2} applied to the finite extensions $E$ of $F$.
\end{proof}

In order to deal with the fields elementarily equivalent to $F$ we also 
need a uniform variant of $\mathrm{(G2)}$.
We denote by $C_n$ the cyclic group of order $n$.
We write $H\leq G$ and $H\lhd G$ to denote that $H$ is a closed respectively closed normal subgroup of $G$.

\begin{Definition}
For $n,m\in\mathbb{N}$ we let
$$
  I_G(n) = |\{N\lhd G : G/N\cong C_n \}|
$$
and
$$
 I_G(n,m) = \sup \{ I_H(n) :  H\leq G,(G:H)\leq m \}.
$$
\end{Definition}
With this definition, $\mathrm{(G2)}$ 
means that $I_H(n)<\infty$ for all open $H\lhd G$, 
and the uniform variant of $\mathrm{(G2)}$ can now be formulated as follows:
\begin{enumerate}
\item[$\mathrm{(G2^*)}$] For every $n,m\in\mathbb{N}$, $I_G(n,m)<\infty$.
\end{enumerate}
In other words, $G_F$ satisfies $\mathrm{(G2^*)}$ if and only if there is a uniform bound on the number of cyclic extensions 
of degree $n$ of finite extensions $E$ of $F$ of degree at most $m$.

\begin{Proposition}\label{prop:F2*G2*}
If $\mu_\infty\subseteq F$, 
then $G=G_F$ satisfies $\mathrm{(G2^*)}$ if and only if all fields 
$F^*\equiv F$ satisfy $\mathrm{(F2)}$.
\end{Proposition}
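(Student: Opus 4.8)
The plan is to reformulate $\mathrm{(G2^*)}$ as a statement about the \emph{orders} of the finite groups $E^\times/(E^\times)^n$, as $E$ ranges over finite extensions of $F$ of bounded degree, and then to confront this statement with elementary transfer. Since $\mu_\infty\subseteq F$, we have $\mu_n\subseteq E$ for every finite extension $E/F$ and every $n$, so by Kummer theory (as in the proof of Lemma~\ref{lem:F2G2}) the cyclic extensions of $E$ of degree exactly $n'$ correspond, for each $n'\mid n$, to the cyclic subgroups of order $n'$ of $E^\times/(E^\times)^n$; write $I_{G_E}(n')$ for their number. Thus $E^\times/(E^\times)^n$ contains $\sum_{n'\mid n}I_{G_E}(n')$ cyclic subgroups of order dividing $n$, and since it has exponent $n$ it is the union of them, each of size at most $n$; hence
\[
 I_{G_E}(n)\;\le\;|E^\times/(E^\times)^n|\;\le\;n\cdot\sum_{n'\mid n}I_{G_E}(n').
\]
Because the closed subgroups of $G=G_F$ of index at most $m$ correspond via Galois theory to the finite extensions $E/F$ with $[E:F]\le m$, taking suprema over these gives: $\mathrm{(G2^*)}$ holds if and only if for all $n,m\in\mathbb{N}$ the number $B(n,m):=\sup\{\,|E^\times/(E^\times)^n|\,\}$, the supremum over all finite $E/F$ with $[E:F]\le m$, is finite. (For ``$\Leftarrow$'' use the left inequality at fixed $n,m$; for ``$\Rightarrow$'' use the right one together with $I_{G_E}(n')\le I_G(n',m)<\infty$ for all $n'\mid n$.)

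For ``$\Rightarrow$'' of the Proposition, assume $\mathrm{(G2^*)}$, so $B(n,m)<\infty$ for all $n,m$, and let $F^*\equiv F$; we must show $F^*$ satisfies $\mathrm{(F2)}$. The key point is that for fixed $d,n,N\in\mathbb{N}$ the statement ``every field extension of degree $d$ of the ground field has $|L^\times/(L^\times)^n|\le N$'' is expressible by a single first-order sentence $\theta_{d,n,N}$ in the language of rings: such an $L$ is $K[X]/(p)$ for a monic irreducible polynomial $p$ of degree $d$ over the ground field $K$, so one quantifies over the $d$-tuple of coefficients of $p$, notes that irreducibility of $p$ is first-order for fixed $d$, represents elements of $K[X]/(p)$ by $d$-tuples over $K$ on which the ring operations and the $n$-th power map are given by fixed polynomial maps whose coefficients are polynomials in those of $p$, and finally expresses $|L^\times/(L^\times)^n|\le N$ by requiring that of any $N+1$ nonzero elements two lie in the same coset modulo $(L^\times)^n$. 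Now fix $n,m$ and put $N:=B(n,m)$. Then $F\models\theta_{d,n,N}$ for every $d\le m$, hence so does $F^*$. Since $\mathrm{char}\,F^*=0$ (this being a scheme of first-order sentences true in $F$), the primitive element theorem shows that every finite extension $E^*/F^*$ of degree $d\le m$ has the form $F^*[X]/(p)$ as above, whence $|(E^*)^\times/((E^*)^\times)^n|\le N<\infty$. As $n,m$ were arbitrary, $F^*$ satisfies $\mathrm{(F2)}$.

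For ``$\Leftarrow$'' we argue contrapositively. If $\mathrm{(G2^*)}$ fails then $B(n,m)=\infty$ for some $n,m$, so for every $N\in\mathbb{N}$ there is a finite extension $E_N/F$ with $[E_N:F]\le m$ and $|E_N^\times/(E_N^\times)^n|>N$; passing to a subsequence we may assume all $E_N$ have one fixed degree $d$, say $E_N=F[X]/(p_N)$ with $p_N$ monic irreducible of degree $d$ over $F$. Let $F^*\succeq F$ be a sufficiently saturated elementary extension, so $F^*\equiv F$. Consider the partial type $\Sigma(\bar c)$ in a $d$-tuple of variables over $\emptyset$ asserting that $\bar c$ is the coefficient tuple of a monic irreducible polynomial of degree $d$ and, for each $N\in\mathbb{N}$, that $F^*[X]/(p_{\bar c})$ contains $N+1$ nonzero elements lying in pairwise distinct cosets modulo its $n$-th powers (all coded as in the previous paragraph). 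Any finite subset of $\Sigma$ is satisfied in $F$, hence in $F^*$, by the coefficient tuple of $p_N$ for $N$ large enough; by saturation $\Sigma$ is realized by some $\bar c^*$, and then $E^*:=F^*[X]/(p_{\bar c^*})$ is a finite extension of $F^*$ with $|(E^*)^\times/((E^*)^\times)^n|$ infinite. Hence $F^*$ does not satisfy $\mathrm{(F2)}$.

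The main obstacle is really just bookkeeping: carrying out the first-order coding in the sentence $\theta_{d,n,N}$ carefully, and ensuring in ``$\Rightarrow$'' that one bound $N=B(n,m)$ works uniformly for all degrees $d\le m$ — which is exactly what the displayed inequalities between $|E^\times/(E^\times)^n|$ and the numbers $I_{G_E}(n')$ provide. Everything else (Kummer theory, Galois theory, the primitive element theorem, saturation, elementary transfer) is standard.
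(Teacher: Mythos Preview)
Your proof is correct and follows essentially the same strategy as the paper's: express the relevant uniform finiteness bound by a first-order scheme in the language of rings, transfer it to $F^*\equiv F$ for the forward direction, and use an $\aleph_0$-saturated elementary extension for the contrapositive of the backward direction. The only cosmetic difference is that the paper codes directly the bound on the number of cyclic degree-$n$ extensions (the sentences $\varphi_{m,n,k}$), whereas you first reformulate $\mathrm{(G2^*)}$ as finiteness of $B(n,m)=\sup_{[E:F]\le m}|E^\times/(E^\times)^n|$ via your displayed inequalities and then code $|E^\times/(E^\times)^n|\le N$; this detour is harmless and makes the link to $\mathrm{(F2)}$ slightly more explicit, but it is not needed.
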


\begin{proof}
For every $m,n,k\in\mathbb{N}$,
there is a sentence $\varphi_{m,n,k}$ in the language of fields such that
$F\models \varphi_{m,n,k}$ if and only if every extension $E$ of $F$ with $[E:F]\leq m$
has at most $k$ cyclic extensions of degree $n$.

If $G_F$ satisfies $\mathrm{(G2^*)}$, 
then $F\models\varphi_{m,n,I_{G_F}(n,m)}$ for every $m,n$, 
so if $F^*\equiv F$, then also $F^*\models\varphi_{m,n,I_{G_F}(n,m)}$,
and therefore $I_{G_{F^*}}(n,m)\leq I_{G_F}(n,m)<\infty$ for all $m,n$.
In particular, $G_{F^*}$ satisfies $\mathrm{(G2)}$, so $F^*$ satisfies 
$\mathrm{(F2)}$ by Proposition \ref{prop:F2G2}.

Conversely, if $G_F$ does not satisfy $\mathrm{(G2^*)}$, 
then there exist $m,n$ such that
$F\models\neg\varphi_{m,n,k}$ for every $k$.
Let $F^*$ be an $\aleph_0$-saturated elementary extension of $F$. 
Then for every $k$, $F^*$ has an extension $E_k$ with $[E_k:F^*]\leq m$ which has more than $k$ cyclic extensions of degree $n$.
By saturation, $F^*$ has an extension $E^*$ with $[E^*:F^*]\leq m$ which has infinitely many cyclic extensions of degree $n$,
so $F^*$ does not satisfy $\mathrm{(F2)}$, by Lemma \ref{lem:F2G2}.
\end{proof}

\begin{Remark}
If $G$ is small, then the supremum in the definition of $I_G(n,m)$ runs over only finitely many $H$,
so $\mathrm{(G1)}$ implies $\mathrm{(G2^*)}$.
We thus have the following implications for a profinite group $G$:
$$
 G\mbox{ is finitely generated } \Longrightarrow\; \mathrm{(G1)} 
\;\Longrightarrow\; \mathrm{(G2^*)} \;\Longrightarrow\; \mathrm{(G2)}
$$
For the first implication see \cite[16.10.2]{FJ}.
It is well-known that the first implication cannot be reversed (see \cite[16.10.4]{FJ}),
and what we show in the next section is that the same holds for the other two implications.

We remark without proof that $G$ satisfies $\mathrm{(G2)}$ if and only if every open subgroup of $G$ has only finitely many {\em solvable} quotients of given order $n$, 
cf.~\cite[p.~III-30 Exercice]{Serre},
so if $G$ is pro-solvable, then the last two arrows are equivalences. 
Moreover, for pro-$p$ groups, all four conditions are equivalent, 
cf.~\cite[p.~III-28 Corollaire]{Serre}.
\end{Remark}

\section{Constructing profinite groups}

\noindent
We now construct a profinite group that satisfies 
$\mathrm{(G2^*)}$ but not $\mathrm{(G1)}$,
which is relatively straightforward,
and another one that satisfies $\mathrm{(G2)}$ but not $\mathrm{(G2^*)}$,
which requires more group theory.

\begin{Proposition}\label{lem:simple}
Let $S$ be any non-abelian finite simple group, $\kappa$ an infinite cardinal number, and $G=S^{\kappa}$.
Then $G$ satisfies $\mathrm{(G2^*)}$ but not $\mathrm{(G1)}$.
\end{Proposition}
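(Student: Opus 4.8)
The plan is to establish the two halves of the statement separately, the failure of $\mathrm{(G1)}$ being immediate and $\mathrm{(G2^*)}$ requiring the structure theory of finite powers of a non-abelian simple group. For $\neg\mathrm{(G1)}$: since $S$ is a nontrivial group it has a proper subgroup $U<S$ of some fixed index $d=[S:U]\geq 2$ (e.g.\ $U=\{1\}$), and for each coordinate $i\in\kappa$ the preimage $H_i$ under the $i$-th projection $G=S^{\kappa}\to S$ of $U$ is an open subgroup of index $d$. The $H_i$ are pairwise distinct because $U\subsetneq S$, so $G$ has infinitely many open subgroups of the single index $d$, and $\mathrm{(G1)}$ fails.

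For $\mathrm{(G2^*)}$ the key structural input is the standard fact that, because $S$ is non-abelian simple (hence simple with trivial centre), every normal subgroup of a finite power $S^{B}$ is a subproduct $\prod_{i\in A}S_i$ for some $A\subseteq B$, where $S_i$ denotes the $i$-th factor. First I would use this to describe the open normal subgroups of $G=S^{\kappa}$: such a subgroup contains $S^{\kappa\setminus B}:=\{x\in G:x_i=1\text{ for }i\in B\}$ for some finite $B$ by openness, and then the fact above, applied inside $S^{\kappa}/S^{\kappa\setminus B}\cong S^{B}$, shows that it equals $S^{\kappa\setminus C}$ for some finite $C\subseteq\kappa$, of index $|S|^{|C|}$.

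Now let $H\leq G$ be any open subgroup with $(G:H)\leq m$ (recall that in a profinite group a closed subgroup of finite index is automatically open). Its normal core $\mathrm{Core}_G(H)$ is an open normal subgroup of $G$ of index at most $m!$, hence of the form $S^{\kappa\setminus C}$ with $|S|^{|C|}\leq m!$, so $|C|\leq\log_{|S|}(m!)$. Since $H\supseteq S^{\kappa\setminus C}$, writing $G=S^{C}\times S^{\kappa\setminus C}$ gives $H=\bar H\times S^{\kappa\setminus C}$ for the finite group $\bar H=\pi_{C}(H)\leq S^{C}$, which has order at most $m!$. Next I pass to abelianizations: $S$ is finite and perfect, hence has finite commutator width, so $S^{\kappa\setminus C}$ is topologically perfect; therefore $\overline{[H,H]}=[\bar H,\bar H]\times S^{\kappa\setminus C}$ and $H^{\mathrm{ab}}:=H/\overline{[H,H]}\cong\bar H/[\bar H,\bar H]$ is a finite abelian group of order at most $m!$. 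Every continuous surjection $H\to C_n$ factors through $H^{\mathrm{ab}}$, and since $H/N\cong C_n$ forces $N$ open, the closed normal subgroups $N\lhd H$ with $H/N\cong C_n$ correspond bijectively to the index-$n$ subgroups of $H^{\mathrm{ab}}$. Thus $I_H(n)$ is bounded by the number of subgroups of a group of order at most $m!$, a quantity $b(m)$ depending on $m$ alone, and so $I_G(n,m)\leq b(m)<\infty$ for all $n,m$, i.e.\ $\mathrm{(G2^*)}$ holds.

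The only genuinely non-formal step is the group theory at the start of the $\mathrm{(G2^*)}$ argument: the classification of the normal subgroups of $S^{B}$ and its upgrade to the open normal subgroups of the infinite product $S^{\kappa}$, which is precisely what forces the relevant part $C$ of the coordinate set to be bounded in terms of $m$ alone. Everything afterwards — the normal-core index bound, the splitting $H=\bar H\times S^{\kappa\setminus C}$, the passage to the finite abelian $H^{\mathrm{ab}}$ via perfectness of $S^{\kappa\setminus C}$, and the final counting — is routine.
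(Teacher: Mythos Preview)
Your proof is correct and follows essentially the same strategy as the paper: both arguments pass to the normal core $N=\mathrm{Core}_G(H)$, use the structure of open normal subgroups of $S^{\kappa}$ to get $(G:N)\leq m!$, and then bound $I_H(n)$ by the number of subgroups of the finite group $H/N$. The only difference is in the final step: where you compute $H^{\mathrm{ab}}\cong\bar H^{\mathrm{ab}}$ via the product decomposition $H=\bar H\times N$ and (topological) perfectness of $N$, the paper argues more directly that any $M\lhd H$ with $H/M\cong C_n$ must contain $N$, since $N/(M\cap N)\hookrightarrow H/M\cong C_n$ is cyclic while $N\cong G$ has no nontrivial cyclic quotients---this avoids the commutator-width detour but yields the same bound $I_G(n,m)\leq 2^{m!}$.
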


\begin{proof}
Note that every open normal subgroup of $G$ is isomorphic to $G$ itself, with quotient of the form $S^k$ with $k\in\mathbb{Z}_{\geq0}$, cf.~\cite[Lemma 8.2.4]{RZ}.
In particular, $I_G(n)=0$ for all $n$.
If $H\leq G$ with $(G:H)\leq m$, let $N$ be the biggest normal subgroup of $G$ contained in $H$. 
Then $(G:N)\leq m!$ and $I_N(n)=0$ for all $n$. If $M\lhd H$ with $H/M\cong C_n$, then
$M\cap N\lhd N$ and $N/(M\cap N)\cong MN/M\leq H/M\cong C_n$ is cyclic, hence trivial.
Thus, $N\leq M\leq H$, and so $I_H(n)$ is bounded by the number of subgroups of $H/N$.
Therefore, $I_G(n,m)\leq 2^{m!}$.
Since $G$ has at least $\kappa$ many quotients isomorphic to $S$, it is not small.
\end{proof}

\begin{Lemma}\label{lem:reducetoprime}
Let $n\in\mathbb{N}$. Then $I_G(n)\leq 2^{n^s}$ with $s=\sum_{p|n\;\mathrm{prime}}I_G(p)$.
\end{Lemma}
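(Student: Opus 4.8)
The plan is to reduce the count of normal subgroups $N \lhd G$ with $G/N \cong C_n$ to the prime-power case and ultimately to the prime case. Write $n = p_1^{a_1} \cdots p_r^{a_r}$. If $G/N \cong C_n$, then by CRT $C_n \cong \prod_i C_{p_i^{a_i}}$, so $N$ is the intersection $N = \bigcap_i N_i$ of the preimages $N_i \lhd G$ of the factors $C_{p_i^{a_i}}$ under the quotient map; each $N_i \supseteq N$ satisfies $G/N_i \cong C_{p_i^{a_i}}$. Thus the map $N \mapsto (N_1, \dots, N_r)$ is injective, and so $I_G(n) \le \prod_i I_G(p_i^{a_i})$. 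This shows it suffices to bound $I_G(p^a)$ for a prime power $p^a$ in terms of $I_G(p)$, with enough room to absorb the product into the stated bound $2^{n^s}$.

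Next I would handle the prime-power case. Suppose $G/N \cong C_{p^a}$. The unique subgroup of $C_{p^a}$ of order $p$ corresponds to a normal subgroup $N' \lhd G$ with $N \le N'$ and $G/N' \cong C_p$; there are at most $I_G(p)$ choices for $N'$. Having fixed $N'$, the subgroup $N$ is a normal subgroup of $G$ contained in $N'$ with $N'/N$ cyclic of order $p^{a-1}$ — but more usefully, $N$ is determined inside $N'$: since $G/N$ is cyclic, $N \supseteq (N')^{(p^{a-1})}[N',N']\cdots$, i.e.\ $N$ is the unique subgroup with $N'/N \cong C_{p^{a-1}}$ \emph{if} $G/N$ is forced to be cyclic. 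More carefully, $G/N$ being cyclic of order $p^a$ means $G/N$ is generated by one element, so the Frattini-type argument gives that $N$ must contain the closed subgroup $N'^{p^{a-1}}[G,N']$ of $N'$; since $G/N'$ is a fixed cyclic $p$-group, $N'/(N'^{p^{a-1}}[G,N'])$ is a fixed finite abelian $p$-group, and $N$ corresponds to one of its subgroups with the right cyclic quotient. This bounds the number of $N$ lying below a fixed $N'$ by some function of $p^a$ alone — certainly at most $p^{a p^{a}}$ or any crude bound one likes — hence $I_G(p^a) \le I_G(p) \cdot (\text{bound depending only on } p^a)$. Combining with the first paragraph and absorbing all the purely arithmetic factors into the exponent gives $I_G(n) \le 2^{n^s}$ with $s = \sum_{p \mid n} I_G(p)$, where I expect the precise bookkeeping to show that the naive product $\prod_i I_G(p_i)^{(\text{arith.})}$ is dominated by $2^{n^s}$.

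The main obstacle is the prime-power step: one must argue that once the "bottom layer" $N' \lhd G$ with $G/N' \cong C_p$ is fixed, there are only boundedly many $N$ (boundedly in terms of $n$, independent of $G$) with $N \le N'$ and $G/N \cong C_{p^a}$. The key point making this work is that $G/N$ is required to be \emph{cyclic}, hence one-generated, so $N$ is sandwiched between a fixed open subgroup of $N'$ (the relevant power-commutator subgroup, which has index in $N'$ bounded purely by $p^a$) and $N'$ itself; there are only finitely many closed subgroups in between, and the number is bounded by the order of that finite quotient, which depends only on $p^a$. Once that lemma is in hand, the reduction in the first paragraph and a crude estimate — noting $\prod_{p \mid n} I_G(p) \le \prod_{p \mid n} 2^{I_G(p)} \le 2^{\sum_p I_G(p)} = 2^s \le 2^{n^s}$ and that the prime-power correction factors, being at most something like $n^{n}$ in total, can be absorbed by replacing $2^s$ with $2^{n^s}$ — finishes the proof.
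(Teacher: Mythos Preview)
Your reduction to prime powers via $I_G(n)\le\prod_i I_G(p_i^{a_i})$ is fine, but the prime-power step contains a false claim. You assert that once $N'$ with $G/N'\cong C_p$ is fixed, the number of $N\le N'$ with $G/N\cong C_{p^a}$ is bounded by a function of $p^a$ alone, because $N$ is sandwiched between $(N')^{p^{a-1}}[G,N']$ and $N'$ and this gap has size bounded purely in terms of $p^a$. This is not true. Take $G=\mathbb{Z}_p^k$ and $N'=p\mathbb{Z}_p\times\mathbb{Z}_p^{k-1}$. Then $[G,N']=1$ and $(N')^{p^{a-1}}$ has index $p^{(a-1)k}$ in $N'$; a direct count shows there are exactly $p^{(a-1)(k-1)}$ normal subgroups $N\le N'$ with $G/N\cong C_{p^a}$. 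Both quantities grow with $k$, so no bound depending only on $p^a$ exists. The point is that the ``relevant power-commutator subgroup'' you invoke has index controlled by $r_p(N')$, which in turn is governed by $r_p(G)$ and hence by $I_G(p)$, not by $p^a$ alone.

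The approach is salvageable: one can show $I_G(p^a)$ is bounded by a function of $p^a$ and $I_G(p)$ together, and then try to absorb everything into $2^{n^s}$, but the bookkeeping you wave away is exactly where the dependence on $s$ must enter, and you have not done it. The paper sidesteps all of this with a cleaner global argument: take any $r$ distinct normal subgroups $N_1,\dots,N_r$ with $G/N_i\cong C_n$, set $N=\bigcap N_i$, and observe that $A:=G/N$ embeds in $C_n^r$, so $A\cong C_{d_1}\times\dots\times C_{d_k}$ with each $d_i\mid n$. Projecting each factor onto a $C_p$-quotient yields $k$ surjections $G\to C_p$ with pairwise distinct kernels, forcing $k\le s$; hence $|A|\le n^s$ and $r\le 2^{|A|}\le 2^{n^s}$. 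No layer-by-layer induction is needed.
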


\begin{proof}
Let $N_1,\dots,N_r\lhd G$ be distinct normal subgroups with $G/N_i\cong C_n$.
Let $N=\bigcap_{i=1}^r N_i$.
Then $A:=G/N$ embeds into $C_n^r$, hence 
$A\cong C_{d_1}\times\dots\times C_{d_k}$ with $k\in\mathbb{N}$ and $d_i|n$ for all $i$.
If $p|d_i$ is prime, then there is an epimorphism $\rho_i:C_{d_i}\rightarrow C_p$, and the maps
$$
 \delta_i:G\rightarrow A\stackrel{\cong}{\rightarrow}C_{d_1}\times\dots\times C_{d_k}\stackrel{\pi_i}{\rightarrow} C_{d_i}\stackrel{\rho_i}{\rightarrow} C_p
$$ 
are surjective and mutually distinct ($1\leq i\leq k$).
Thus, if $I_G(p)<\infty$ for all $p|n$, then $k\leq s:=\sum_{p|n}I_G(p)$.
As $N_1/N,\ldots,N_r/N$ are distinct subsets of $A$,
we see that
$r$ is bounded by the number of subsets of $A$.
Hence, $r\leq 2^{|A|}\leq 2^{n^s}$.
\end{proof}

\begin{Remark}\label{rem:rp}
Let $p$ be a prime number, and let $M_p(G)$ be the intersection over all $N\lhd G$ with $G/N\cong C_p$.
Then $G/M_p(G)\cong C_p^{r_p(G)}$, where $r_p(G)$ is the {\em $p$-rank} of $G$, cf.~\cite[Sec.~8.2]{RZ}.
Since $V=C_p^{r_p(G)}$ is an $\mathbb{F}_p$-vector space of dimension $r_p(G)$, and the $C_p$-quotients of $V$ correspond to
1-dimensional subspaces of the dual space $V^*$, 
we see that 
$$
 I_G(p)=|\mathbb{P}V^*|=\frac{p^{r_p(G)}-1}{p-1}
$$ 
if $r_p(G)$ is finite, and $I_G(p)=\infty$ otherwise.
We also see that if $F$ is a field of characteristic zero with $\mu_p\subseteq F$, then
$|F^\times/(F^\times)^p|=|G_F/M_p(G_F)|=p^{r_p(G_F)}$, cf.\ the proof of Lemma~\ref{lem:F2G2}.
\end{Remark}

\begin{Lemma}\label{lem:perfectbasis}
If a profinite group $G$ has, for every prime $p$, a basis of neighborhoods of $1$ consisting of open normal subgroups $U$
with $r_p(U)<\infty$,
then it satisfies $\mathrm{(G2)}$.
\end{Lemma}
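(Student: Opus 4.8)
The plan is to unwind $\mathrm{(G2)}$ to a statement about $p$-ranks and then push finiteness up from the subgroups supplied by the hypothesis. Recall that $\mathrm{(G2)}$ asks that $I_H(n)<\infty$ for every open $H\leq G$ and every $n\in\mathbb{N}$. So I would fix such an $H$ and $n$ and apply Lemma~\ref{lem:reducetoprime} with $H$ in place of $G$: this bounds $I_H(n)$ by $2^{n^s}$ where $s=\sum_{p\mid n}I_H(p)$, so it is enough to see that $I_H(p)<\infty$ for every prime $p\mid n$. By Remark~\ref{rem:rp}, $I_H(p)<\infty$ is equivalent to $r_p(H)<\infty$. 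Hence the whole lemma reduces to the claim that \emph{every open subgroup $H\leq G$ has $r_p(H)<\infty$ for every prime $p$}, and this is precisely where the hypothesis enters.

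To establish the claim, fix an open $H\leq G$ and a prime $p$. By hypothesis there is a basis of neighborhoods of $1$ consisting of open normal subgroups of $G$ of finite $p$-rank; since $H$ is an open neighborhood of $1$, I can pick one such subgroup $U$ with $U\leq H$. Then $U\lhd H$, $(H:U)\leq (G:U)<\infty$, and $r_p(U)<\infty$. It remains to deduce $r_p(H)<\infty$, and for this I would examine the restriction map $\mathrm{res}\colon\mathrm{Hom}(H,C_p)\to\mathrm{Hom}(U,C_p)$ between groups of continuous homomorphisms. A homomorphism in $\ker(\mathrm{res})$ is trivial on $U$, hence factors through the finite group $H/U$, so $\ker(\mathrm{res})$ injects into $\mathrm{Hom}(H/U,C_p)$ and is therefore finite; the fibers of $\mathrm{res}$ are cosets of $\ker(\mathrm{res})$, and the target $\mathrm{Hom}(U,C_p)$ is finite because $r_p(U)<\infty$. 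Thus $\mathrm{Hom}(H,C_p)$ is finite, and since $\mathrm{Hom}(H,C_p)\cong\mathrm{Hom}(H/M_p(H),C_p)$ with $H/M_p(H)\cong C_p^{\,r_p(H)}$ in the notation of Remark~\ref{rem:rp}, this forces $r_p(H)<\infty$, proving the claim and hence the lemma.

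I do not expect a genuine obstacle here: the heart of the argument is just the easy half of a Reidemeister--Schreier-type estimate, bounding the number of index-$p$ normal subgroups of $H$ in terms of that for its finite-index subgroup $U$ together with the index $(H:U)$. The only points requiring a little care are that the promised basis of open normal subgroups is allowed to depend on $p$ (which is harmless, since $p$ is fixed before $U$ is chosen), and that one should work throughout with \emph{continuous} homomorphisms into the finite discrete group $C_p$, so that every homomorphism in sight has open kernel and all the finiteness statements above are literally statements about finite sets.
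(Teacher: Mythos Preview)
Your proof is correct and follows essentially the same route as the paper: both reduce via Lemma~\ref{lem:reducetoprime} and Remark~\ref{rem:rp} to showing $r_p(H)<\infty$ for every open $H\leq G$ and every prime $p$, then pick an open normal $U\leq H$ with $r_p(U)<\infty$ and bound $r_p(H)$ in terms of $r_p(U)$ and the index. The only cosmetic difference is that you phrase this bound via the exact sequence $1\to\mathrm{Hom}(H/U,C_p)\to\mathrm{Hom}(H,C_p)\to\mathrm{Hom}(U,C_p)$, while the paper carries out the dual index computation directly, obtaining $(H:M_p(H))\leq(G:U)\cdot(U:M_p(U))$.
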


\begin{proof}
Let $H\leq G$ be an open subgroup and let $p$ be a prime number.
By assumption, $H$ contains an open normal subgroup $U$ of $G$ with $r_p(U)<\infty$.
Thus,
$$
 U/(U\cap M_p(H))\cong M_p(H)U/M_p(H)\leq H/M_p(H)\cong C_p^{r_p(H)},
$$ 
so $M_p(U)\leq U\cap M_p(H)$,
which implies that $(H:M_p(H))\leq (G:U)\cdot (U:M_p(U))<\infty$, and hence $r_p(H)$ is finite.
Since this holds for every $p$, Lemma \ref{lem:reducetoprime} shows that $G$ satisfies $\mathrm{(G2)}$.
\end{proof}

Recall that a profinite group $G$ is {\em perfect} if $G'=G$, where $G'$ denotes the {\em closed} subgroup of $G$ generated by the commutators.
Thus, $G$ is perfect if and only if $r_p(G)=0$ for all primes $p$.

\begin{Lemma}\label{lem:perfectproduct}
Every product $G=\prod_{i\in I} G_i$ of finite perfect groups $G_i$ satisfies 
$\mathrm{(G2)}$.
\end{Lemma}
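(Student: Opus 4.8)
The plan is to deduce this from Lemma~\ref{lem:perfectbasis}, so I need to produce, for every prime $p$, a basis of neighborhoods of $1$ in $G=\prod_{i\in I}G_i$ consisting of open normal subgroups of finite $p$-rank. The obvious candidates are the subgroups $U_S=\{(g_i)_i\in G:g_i=1\text{ for all }i\in S\}$ for finite $S\subseteq I$: these are open and normal in $G$, and as $S$ ranges over the finite subsets of $I$ they form a basis of neighborhoods of $1$ (the same basis for every $p$). Each $U_S$ is canonically isomorphic to $\prod_{i\notin S}G_i$, again a product of finite perfect groups. So it is enough to show that \emph{any} product of finite perfect groups is perfect: then, by the remark just before the lemma, $r_p(U_S)=0<\infty$ for all primes $p$, and Lemma~\ref{lem:perfectbasis} shows that $G$ satisfies $\mathrm{(G2)}$.

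To prove that $G=\prod_{i\in I}G_i$ is perfect, I would show that the abstract subgroup $D$ generated by all commutators of $G$ is dense; since $G'=\overline{D}$ is closed, this gives $G'=G$. Fix $(g_i)_i\in G$ and a finite $S\subseteq I$; I want an element of $D$ that agrees with $(g_i)_i$ in every coordinate $i\in S$. Each $G_i$ with $i\in S$ is finite and perfect, hence equals its own commutator subgroup, so $g_i$ is a product of finitely many commutators in $G_i$; because $S$ is finite I may choose a single $k$ that works for all $i\in S$ simultaneously (padding with trivial commutators $[1,1]$), say $g_i=\prod_{j=1}^{k}[a_i^{(j)},b_i^{(j)}]$ for $i\in S$. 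Setting all coordinates outside $S$ equal to $1$ produces elements $a^{(j)},b^{(j)}\in G$ with $\prod_{j=1}^{k}[a^{(j)},b^{(j)}]\in D$ having $i$-th coordinate $g_i$ for every $i\in S$. Hence $D$ meets every basic open set and is dense.

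The only genuinely delicate point is this last step, because for an infinite product the abstract commutator subgroup $D$ can be a proper subgroup of $G$ — precisely when the commutator widths of the $G_i$ are unbounded, so that no global $k$ exists. This is exactly why the statement must be read with the \emph{closed} commutator subgroup and why the finiteness of the $G_i$ is used: finiteness forces each individual $g_i$ to have finite commutator width, and since only finitely many coordinates need to be matched at any time, a local bound on the width is all that is required. The remaining verifications (normality and openness of $U_S$, that the $U_S$ form a basis, and $U_S\cong\prod_{i\notin S}G_i$ topologically) are immediate from the definition of the product topology.
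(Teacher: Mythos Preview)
Your proof is correct and follows exactly the paper's approach: take the standard basis $U_S=\prod_{i\notin S}G_i$ for finite $S$, observe that each $U_S$ is perfect, and invoke Lemma~\ref{lem:perfectbasis}. The only difference is that the paper simply asserts ``each $G_J$ is perfect as a product of perfect groups,'' while you supply a careful argument for this (density of the abstract commutator subgroup via a local commutator-width bound on finitely many coordinates); your added detail is correct and in fact justifies the one step the paper leaves implicit.
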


\begin{proof}
Note that the open normal subgroups $G_J=\prod_{i\in I\setminus J}G_i$, $J\subseteq I$ finite, form a basis of neighborhoods of $1$ of $G$.
Moreover, each $G_J$ is perfect as a product of perfect groups.
Thus, the claim follows from Lemma \ref{lem:perfectbasis}.
\end{proof}

\begin{Lemma}\label{lem:perfectextensions}
Let $S$ be a non-abelian finite simple group and $p$ a prime number.
For every $k_0$ there exists $k\geq k_0$ and a group extension of $S$ by $C_p^k$ which is perfect.
\end{Lemma}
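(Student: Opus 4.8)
The plan is to build the desired perfect extension of $S$ by $C_p^k$ as a subgroup of a wreath product, exploiting the fact that wreath products of perfect groups with transitive permutation actions tend to be perfect. Concretely, I would start from the regular representation of $S$ on itself and consider $W = C_p \wr S = C_p^{|S|} \rtimes S$, where $S$ permutes the coordinates of the base group $B = C_p^{|S|}$ by left translation. This $W$ is a group extension of $S$ by the $\mathbb{F}_p S$-module $B$, which as a module is the regular representation $\mathbb{F}_p[S]$. The issue is that $W$ itself need not be perfect: the augmentation map $B \to C_p$ summing the coordinates is $S$-equivariant with trivial action on the target, so $W$ surjects onto $C_p \times S^{\mathrm{ab}} = C_p$, hence $W$ is not perfect. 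So instead I would pass to the subgroup $W_0 = B_0 \rtimes S$, where $B_0 \le B$ is the augmentation submodule (the kernel of the sum map), which has dimension $|S| - 1$ over $\mathbb{F}_p$.

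The key step is then to verify that $W_0$ is perfect, i.e.\ that $W_0' = W_0$. Since $S$ is perfect, the commutator subgroup $W_0'$ certainly surjects onto $S$, so it suffices to show $B_0 \le W_0'$. For this, note that $W_0'$ contains all commutators $[b, \sigma] = b^{-1} b^\sigma$ for $b \in B_0$, $\sigma \in S$; these generate the submodule $(\sigma - 1)B_0$ summed over $\sigma$, which is the \emph{smallest} $S$-submodule $B_1$ of $B_0$ with $B_0/B_1$ a trivial module. Thus $W_0' \supseteq B_1$ and $W_0/W_0'$ maps onto $B_0/B_1$, and I need $B_1 = B_0$, equivalently that $B_0$ has no nonzero trivial quotient module — equivalently, $H_0(S, B_0) = (B_0)_S = 0$. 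From the short exact sequence $0 \to B_0 \to \mathbb{F}_p[S] \to \mathbb{F}_p \to 0$ of $\mathbb{F}_p S$-modules and the right-exactness of coinvariants, $(B_0)_S$ surjects onto... no: coinvariants give $(\mathbb{F}_p[S])_S = \mathbb{F}_p \to (\mathbb{F}_p)_S = \mathbb{F}_p$, which is an isomorphism, so the connecting map forces $(B_0)_S = 0$ \emph{provided} $H_1(S,\mathbb{F}_p) = S^{\mathrm{ab}} \otimes \mathbb{F}_p = 0$, which holds since $S$ is perfect. Hence $(B_0)_S = 0$, so $B_1 = B_0$ and $W_0$ is perfect. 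This is the main obstacle — getting the module theory exactly right so that perfectness of $S$ feeds in — but it comes down to this one homological computation.

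Finally I address the size requirement $k \ge k_0$. The group $W_0$ just constructed is a perfect extension of $S$ by $C_p^{|S|-1}$, with $k = |S| - 1 \ge 1$, but this $k$ is fixed. To make $k$ arbitrarily large, I iterate: replace the base module $B_0$ by the augmentation submodule of $\mathbb{F}_p[S]^{\oplus t} = \mathbb{F}_p[S^t\text{-set of size } t|S|]$, i.e.\ take $S$ acting on $t$ disjoint copies of its regular representation. The same argument (the augmentation submodule of a direct sum of copies of $\mathbb{F}_p[S]$ still has vanishing coinvariants, since $H_1(S,\mathbb{F}_p)=0$) shows the corresponding $W_0^{(t)} = (\mathbb{F}_p[S]^t)_0 \rtimes S$ is perfect, and now it is an extension of $S$ by $C_p^{t|S| - t} = C_p^{t(|S|-1)}$. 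Choosing $t$ with $t(|S|-1) \ge k_0$ gives the claim with $k = t(|S|-1)$. Alternatively one can simply take a direct product of $W_0$ with a trivial-module extension, but iterating the augmentation construction keeps everything perfect transparently.
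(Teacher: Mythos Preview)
Your construction is essentially the paper's: both form the wreath product $C_p^{k_0}\wr S$ (your ``$t$ copies'' version is exactly this with $t=k_0$), pass to the augmentation part $B_0\rtimes S$, and obtain a perfect extension of $S$ by $C_p^{k_0(|S|-1)}$. The only difference is that the paper cites Meldrum for the identity $\Gamma''=\Gamma'=B_0\rtimes S$, whereas you supply a direct argument via $(B_0)_S=0$ using $H_1(S,\mathbb{F}_p)=0$; your verification is correct and has the advantage of being self-contained.
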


\begin{proof}
Let $A=C_p^{k_0}$ and let $\Gamma=A\wr S$ be the wreath product, which is defined as the semidirect product $B\rtimes S$, 
where $S$ acts on the group $B$ of functions $f:S\rightarrow A$ by $f^\sigma(\tau)=f(\tau\sigma)$, where $\sigma,\tau\in S$.
Then $\Gamma''=\Gamma'=B_0\rtimes S$, where 
$$
 B_0=\left\{f\in B\;:\;\prod_{\sigma\in S}f(\sigma)=1\right\},
$$ 
see for example 
\cite[Ch.~I Cor.~4.9, Thm~4.11]{Meldrum}.
In particular, $\Gamma'$ is a perfect extension of $S$ by $B_0\cong C_p^k$, where $k:=k_0\cdot |S|-k_0\geq k_0$.
\end{proof}

\begin{Proposition}\label{prop:G2notG2*}
Let $S$ be a non-abelian finite simple group and $p$ a prime number.
Let $G$ be the product over all perfect extensions of $S$ by $C_p^k$ for all $k\in\mathbb{N}$.
Then $G$ satisfies $\mathrm{(G2)}$ but not $\mathrm{(G2^*)}$.
\end{Proposition}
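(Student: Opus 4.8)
The plan is to treat the two assertions separately; almost all of the work has already been done in the preceding lemmas.

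For $\mathrm{(G2)}$: by construction $G=\prod_{i\in I}G_i$, where each factor $G_i$ is a perfect extension of the finite group $S$ by the finite group $C_p^{k_i}$ for some $k_i\in\mathbb{N}$, hence is itself a finite perfect group. So $G$ is a product of finite perfect groups, and Lemma \ref{lem:perfectproduct} applies verbatim to give $\mathrm{(G2)}$.

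For the failure of $\mathrm{(G2^*)}$ I would take $n=p$ and $m=|S|$ and exhibit open subgroups of $G$ of index $\leq m$ with arbitrarily large $p$-rank. First, for each factor let $A_i\lhd G_i$ be the kernel of the surjection $G_i\to S$, so that $A_i\cong C_p^{k_i}$ and $(G_i:A_i)=|S|$. Writing $\pi_i\colon G\to G_i$ for the projection onto the $i$-th coordinate, the subgroup $H_i:=\pi_i^{-1}(A_i)=A_i\times\prod_{j\neq i}G_j$ is open in $G$ with $(G:H_i)=|S|\leq m$, and $H_i$ surjects onto $A_i\cong C_p^{k_i}$. Since the $p$-rank does not increase under passing to a quotient, $r_p(H_i)\geq k_i$, and hence $I_{H_i}(p)\geq\frac{p^{k_i}-1}{p-1}$ by Remark \ref{rem:rp}. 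By Lemma \ref{lem:perfectextensions} the exponents $k_i$ occurring among the factors of $G$ are unbounded, so
$$
 I_G(p,m)=\sup\{I_H(p):H\leq G,\ (G:H)\leq m\}\geq\sup_i I_{H_i}(p)=\infty,
$$
which means that $\mathrm{(G2^*)}$ fails.

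I do not expect a serious obstacle here: the genuine content is already isolated in Lemma \ref{lem:perfectextensions} (the existence of perfect extensions of $S$ with arbitrarily large elementary abelian $p$-kernel) and in Lemma \ref{lem:perfectproduct} (products of finite perfect groups satisfy $\mathrm{(G2)}$). The only point that needs a moment's care is the last step: one must pass from the normal subgroup $A_i$ sitting inside a single factor to a genuine finite-index subgroup $H_i$ of the whole product $G$, and check that forming the preimage $\pi_i^{-1}(A_i)$ does not destroy the many $C_p$-quotients coming from $A_i$ — which is immediate, since $H_i$ still surjects onto $A_i$.
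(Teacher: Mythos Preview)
Your proof is correct and follows essentially the same route as the paper: Lemma~\ref{lem:perfectproduct} gives $\mathrm{(G2)}$, and for the failure of $\mathrm{(G2^*)}$ both you and the paper pull back the kernel $C_p^{k}\lhd P$ of a perfect factor $P\twoheadrightarrow S$ to an index-$|S|$ subgroup of $G$ with $p$-rank at least $k$, then invoke Lemma~\ref{lem:perfectextensions} to see that $k$ is unbounded. The only cosmetic difference is that you record the sharper estimate $I_{H_i}(p)\geq (p^{k_i}-1)/(p-1)$ via Remark~\ref{rem:rp}, whereas the paper is content with $I_G(p,m)\geq k$.
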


\begin{proof}
By Lemma \ref{lem:perfectextensions},
for every $k_0$ there exists $k\geq k_0$ and a perfect extension $P$ of $S$ by $C_p^k$,
which by the definition is a quotient of $G$.
Since $P$ has an open subgroup $H$ of index $m=|S|$ with $r_p(H)\geq k$, so does $G$,
and therefore $I_G(p,m)\geq k\geq k_0$. Since this holds for every $k_0$, $G$ does not satisfy $\mathrm{(G2^*)}$.
Conversely, Lemma \ref{lem:perfectproduct} implies that $G$ satisfies 
$\mathrm{(G2)}$.
\end{proof}

\section{Constructing fields}

\noindent
We saw that the desired properties of fields are reflected by the properties 
$\mathrm{(G1)}$, $\mathrm{(G2)}$ and $\mathrm{(G2^*)}$ 
of their absolute Galois groups
and we already constructed suitable profinite groups.
However, the groups we constructed do not occur as absolute Galois groups of fields -- they have too much torsion.
Instead, we want to construct fields using the following result, cf.~\cite[23.1.2]{FJ}:

\begin{Proposition}[Lubotzky-van den Dries]\label{prop:LubotzkyDries}
For every field $K$ and every {\em projective} profinite group $G$ there is a perfect pseudo-algebraically closed field $F\supseteq K$
with $G_F\cong G$.
\end{Proposition}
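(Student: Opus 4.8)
The plan is to deduce this from two standard facts about pseudo-algebraically closed (PAC) fields, together with the description of projective profinite groups as closed subgroups (indeed retracts) of free profinite groups. The two facts are: (i) every algebraic extension of a PAC field is again PAC (cf.~\cite{FJ}); and (ii) for every field $K$ and every infinite cardinal $m\geq|K|$, there is a PAC field $L\supseteq K$ with $G_L$ isomorphic to the free profinite group $\hat{F}_m$ of rank $m$ (cf.~\cite[Ch.~23]{FJ}). Fact (ii) is the ``free'' instance of the present statement and carries the real arithmetic content.

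Granting this, I would argue as follows. Replacing $K$ by a purely transcendental extension of transcendence degree $\rk(G)$ if necessary, we may assume $|K|\geq\rk(G)$; set $m=\max(|K|,\aleph_0)$, so that $\rk(G)\leq m$. By fact (ii) choose a PAC field $L\supseteq K$ with $G_L\cong\hat{F}_m$. Since $\rk(G)\leq m$, there is a continuous epimorphism $\pi\colon\hat{F}_m\to G$, and since $G$ is projective, $\pi$ admits a continuous section $s\colon G\to\hat{F}_m$. Then $s(G)$ is a closed subgroup of $G_L$ isomorphic to $G$; let $F$ be its fixed field inside a separable closure of $L$. By the Galois correspondence $G_F\cong s(G)\cong G$, and $F\supseteq L\supseteq K$. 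Finally, $F$ is algebraic over $L$, hence PAC by fact (i); in characteristic zero $F$ is automatically perfect, while in positive characteristic one first passes to the perfect hull of $F$, which is purely inseparable over $F$ and therefore still PAC and has the same absolute Galois group.

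The only genuinely hard step is fact (ii): producing a single PAC field with a prescribed free absolute Galois group over an arbitrary base field. This is where one really has to work, constructing $L$ as the union of an increasing chain of fields, invoking Hilbertianity to realize finite groups as Galois groups along the chain while keeping each field existentially closed in its regular extensions; for the details I would cite \cite[Ch.~23]{FJ}. By contrast, fact (i), the splitting argument from projectivity, and the Galois-theoretic bookkeeping are all routine.
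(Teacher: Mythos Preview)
The paper does not actually prove this proposition: it is stated as a result of Lubotzky--van den Dries with a bare citation to \cite[23.1.2]{FJ}, and no argument is given. So there is nothing in the paper to compare your proof against.

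That said, your sketch is essentially the standard proof one finds in the cited reference. The two facts you isolate are correct: algebraic extensions of PAC fields are PAC (the Ax--Roquette theorem), and over any base field one can build a PAC field with free absolute Galois group of prescribed large rank. The reduction via projectivity is also right: a projective $G$ of rank at most $m$ is a retract of $\hat{F}_m$, so it appears as a closed subgroup of $G_L$, and the fixed field does the job. The handling of perfectness in positive characteristic by passing to the perfect closure is likewise standard. One minor comment: the cardinality bookkeeping is slightly delicate in general (one wants $\rk(G)\leq m$ with $m$ infinite), but your device of enlarging $K$ by a purely transcendental extension handles this adequately. Overall your outline is correct and matches the argument behind the citation; for the purposes of this paper, simply invoking \cite[23.1.2]{FJ} as the authors do is entirely appropriate.
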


In order to apply this result, we have to replace the profinite groups we constructed by projective ones with similar properties,
for which we will make use of the
{\em universal Frattini cover} $\tilde{G}$ of a profinite group $G$, cf.~\cite[Chapter 22]{FJ}.
We do not give the full definition but rather list the properties of $\tilde{G}$ that we need:
\begin{enumerate}
\item $\tilde{G}$ is a projective profinite group and 
there is an epimorphism
$\phi :\tilde{G} \rightarrow G$, see \cite[22.6.1]{FJ}.
\item For any quotient ${\Delta}$ of $\tilde{G}$ there is an 
epimorphism $\Delta \rightarrow \Gamma$ onto some
quotient $\Gamma$ of $G$ such that $\rk(\Delta)=\rk(\Gamma)$,
see~\cite[22.6.3, 22.5.3]{FJ}.
\end{enumerate}
Here, $\rk(G)$ denotes the profinite rank of $G$, cf.~\cite[Chapter 17.1]{FJ},
which for finite $G$ is just the minimal cardinality of a set of generators.
We now show that the properties $\mathrm{(G2)}$ and $\mathrm{(G2^*)}$ 
are preserved by taking the universal Frattini cover,
which is the technical heart of our construction.

\begin{Lemma}\label{lem:explicit}
For every prime $p$ and every $H\leq\tilde{G}$ with $(\tilde{G}:H)=m$ there exists $G_0\leq G$ with $(G:G_0)\leq m!$ such that
$r_p(H)\leq (m!)^2(r_p(G_0)+2)$.
\end{Lemma}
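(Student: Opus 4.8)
The plan is to relate the $p$-rank of an open subgroup $H$ of $\tilde{G}$ to the $p$-rank of a correspondingly small subgroup of $G$, using property (2) of the universal Frattini cover (which controls the profinite rank of quotients) together with Lemma \ref{lem:reducetoprime} and Remark \ref{rem:rp} (which relate $p$-rank to minimal generating sets via the Frattini quotient). First I would pass from $H$ to a normal subgroup: let $N\lhd\tilde{G}$ be the core of $H$, so $(\tilde{G}:N)\leq m!$ and $r_p(H)$ is controlled by $r_p(N)$ up to the cost of passing through the quotient $H/N$ of order at most $m!$ (concretely, $M_p(N)$ is contained in $N\cap M_p(H)$, so $(H:M_p(H))$ divides $(\tilde{G}:N)\cdot(N:M_p(N))$, giving a bound on $r_p(H)$ polynomial in $r_p(N)$ with constants depending on $m!$). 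Then I would set $\Delta=\tilde{G}/N$, a finite quotient of $\tilde{G}$, and apply property (2) to obtain a quotient $\Gamma$ of $G$ with $\rk(\Delta)=\rk(\Gamma)$; writing $G_0\leq G$ for the preimage of the corresponding subgroup, one arranges $(G:G_0)\leq(\tilde{G}:N)\leq m!$.

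The heart of the matter is to convert the equality of profinite ranks $\rk(\Delta)=\rk(\Gamma)$ into a comparison of $p$-ranks. For a finite group the rank is the minimal number of generators, and for any finite group $Q$ one has the two-sided estimate that $r_p(Q)$ (the dimension of the largest elementary abelian $p$-quotient) is at most $\rk(Q)$, while $\rk(Q)$ is bounded in terms of the ranks of its Frattini-type quotients; more precisely, since $M_p(Q)$ is contained in the Frattini subgroup only up to bounded index, one gets $\rk(Q)\leq c\cdot(\max_p r_p(Q)+\text{const})$ where the constant accounts for the non-$p$-part and for the gap between $M_p$ and the Frattini subgroup. Feeding $\Delta=\tilde{G}/N$ into this: on one side $r_p(H)$ is bounded by a bounded-degree polynomial in $r_p(N)\leq r_p(\Delta)\leq\rk(\Delta)$; on the other side $\rk(\Delta)=\rk(\Gamma)$ is bounded by a bounded multiple of $r_p(\Gamma)+2$ (here one must be a little careful that only the prime $p$ can contribute a large rank — the other primes contribute a bounded amount absorbed into the ``$+2$''), and $r_p(\Gamma)\leq r_p(G_0)$ since $\Gamma$ is a quotient of $G_0$. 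Tracking the constants through these two polynomial bounds yields something of the shape $r_p(H)\leq(m!)^2(r_p(G_0)+2)$.

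The main obstacle I expect is the bookkeeping of constants: one needs the bound in terms of $p$-rank alone (not the ranks at all primes), which requires isolating that a quotient of order $\leq m!$ can only contribute $\leq\log_2(m!)$ to the rank at any prime other than $p$, so that these other primes are swept into the additive ``$+2$''. A secondary subtlety is that $\rk$ for profinite groups is defined via generating sets topologically, so when $\Delta$, $\Gamma$ are genuinely finite one must cite that $\rk$ agrees with the minimal number of generators (as the excerpt notes) and that $r_p(Q)\leq\rk(Q)$ for finite $Q$, which is immediate since a minimal generating set surjects onto any elementary abelian $p$-quotient. Once these two points are in place, the argument is a chain of elementary inequalities: core of $H$, property (2) of $\tilde{G}$, rank-versus-$p$-rank for finite groups, and quotient-monotonicity of $r_p$.
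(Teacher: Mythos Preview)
There is a genuine gap in the choice of the quotient $\Delta$. You take $N$ to be the core of $H$ and set $\Delta=\tilde{G}/N$; but then $|\Delta|\leq m!$, so $\rk(\Delta)\leq\log_2(m!)$ is tiny and carries no information whatsoever about $r_p(N)$. Your claimed chain $r_p(N)\leq r_p(\Delta)\leq\rk(\Delta)$ is simply false: there is no reason the $p$-rank of the kernel should be bounded by anything associated to the (bounded-order) quotient $\tilde{G}/N$. Consequently the whole ``convert $\rk(\Delta)=\rk(\Gamma)$ into a comparison of $p$-ranks'' step collapses, and no bound on $r_p(H)$ emerges.

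The paper's quotient is chosen precisely so that $r_p$ of the core becomes visible as a rank: with $H_0$ the core of $H$, one sets $\Delta=\tilde{G}/M_p(H_0)$, which is large, and $\Delta_0=H_0/M_p(H_0)\cong C_p^{r_p(H_0)}$ is a normal subgroup of index $\leq m!$ with $\rk(\Delta_0)=r_p(H_0)$. Property (2) then gives $\Gamma$ with $\rk(\Gamma)=\rk(\Delta)$ and an epimorphism $\phi:\Delta\to\Gamma$; the image $\Gamma_0=\phi(\Delta_0)$ is again elementary abelian $p$, so $\rk(\Gamma_0)=r_p(\Gamma_0)\leq r_p(G_0)$ with $G_0$ its preimage in $G$. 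The two missing ideas in your sketch are thus (i) quotienting by $M_p(H_0)$ rather than by $H_0$, and (ii) the Nielsen--Schreier inequality $\rk(\Delta_0)\leq 1+(\Delta:\Delta_0)(\rk(\Delta)-1)$, which is what actually transports the bound on $\rk(\Delta)=\rk(\Gamma)$ back up to $r_p(H_0)=\rk(\Delta_0)$. Your vague estimate ``$\rk(Q)\leq c\cdot(\max_p r_p(Q)+\text{const})$'' does not hold for general finite groups and cannot substitute for this; the argument works only because $\Gamma_0$, being the image of an elementary abelian $p$-group, has $\rk(\Gamma_0)=r_p(\Gamma_0)$ on the nose.
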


\begin{proof}
If $H_0$ is the biggest normal subgroup of $\tilde{G}$ contained in $H$, then 
$(\tilde{G}:H_0)\leq m!$. Furthermore, we have 
$$r_p(H)\leq r_p(H_0)+r_p(H/H_0)\leq r_p(H_0)+\log_p(m!)$$
with the first inequality following from \cite[8.2.5(d)]{RZ}.

Let $N=M_p(H_0)$. Since $H_0\lhd\tilde{G}$ and $N$ is characteristic in $H_0$, 
we conclude $N\lhd\tilde{G}$.
Let $\Delta=\tilde{G}/N$ and $\Delta_0=H_0/N\cong C_p^{r_p(H_0)}$.
By (2), there exist epimorphisms 
$\phi:\Delta\rightarrow\Gamma$, $\pi:G\rightarrow\Gamma$ with 
${\rm rk}(\Gamma)={\rm rk}(\Delta)$.
Let $\Gamma_0=\phi(\Delta_0)\lhd\Gamma$ and $G_0=\pi^{-1}(\Gamma_0)\lhd G$ 
and note that
$(G:G_0)=(\Gamma:\Gamma_0)$ divides $(\Delta:\Delta_0)=(\tilde G:H_0)\leq m!$.

Trivially, $r_p(G_0)\geq r_p(\Gamma_0)$.
Since $\Gamma_0$ is an elementary abelian $p$-group, we have
$\rk(\Gamma_0)=r_p(\Gamma_0)$ and so the inequality
$$
 \rk(\Gamma)\leq\rk(\Gamma_0)+\rk(\Gamma/\Gamma_0)\leq r_p(\Gamma_0)+m!
$$
holds.
By the Nielsen-Schreier formula \cite[17.6.3]{FJ}, we get
$$
 \rk(\Delta_0)\leq 1+(\Delta:\Delta_0)(\rk(\Delta)-1).
$$ 
Thus,
$$
 r_p(H_0)=\rk(\Delta_0)\leq 1+m!\cdot(\rk(\Gamma)-1)\leq 1+m!\cdot
(r_p(\Gamma_0)+m!-1)
\leq m! \cdot r_p(G_0)+(m!)^2,
$$
which 
gives 
$$
 r_p(H)\leq\log_p(m!)+m!\cdot r_p(G_0)+(m!)^2\leq (m!)^2(r_p(G_0)+2).
$$
\end{proof}

\begin{Proposition}\label{prop:Frattini}
The universal Frattini cover $\tilde{G}$ satisfies $\mathrm{(G2)}$ 
resp.~$\mathrm{(G2^*)}$ if and only if $G$ does.
\end{Proposition}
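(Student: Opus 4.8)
The plan is to first reformulate both $\mathrm{(G2)}$ and $\mathrm{(G2^*)}$ purely in terms of the $p$-ranks of open subgroups, and then to read off the equivalence from Lemma \ref{lem:explicit} together with the epimorphism $\phi\colon\tilde G\to G$ from property (1).

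For the reformulation I would argue that a profinite group $\Gamma$ satisfies $\mathrm{(G2)}$ if and only if $r_p(H)<\infty$ for every open $H\leq\Gamma$ and every prime $p$, and that it satisfies $\mathrm{(G2^*)}$ if and only if for all $m$ and all primes $p$ the quantity $\sup\{r_p(H): H\leq\Gamma\text{ open},\ (\Gamma:H)\leq m\}$ is finite. One direction is immediate: specializing $n$ to a prime $p$ in the definitions and invoking Remark \ref{rem:rp} (which gives $I_H(p)=\tfrac{p^{r_p(H)}-1}{p-1}$ or $\infty$) shows that $\mathrm{(G2)}$, resp.\ $\mathrm{(G2^*)}$, forces the stated finiteness, resp.\ uniform boundedness, of the $r_p(H)$. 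For the converse, Lemma \ref{lem:reducetoprime} applied to $H$ bounds $I_H(n)$ by $2^{n^s}$ with $s=\sum_{p\mid n}I_H(p)$; since each $I_H(p)$ is a monotone function of $r_p(H)$ by Remark \ref{rem:rp}, a bound on the $r_p(H)$ -- for a single $H$ in the $\mathrm{(G2)}$ case, uniform over all open $H$ of index $\leq m$ in the $\mathrm{(G2^*)}$ case -- translates into the required bound on $I_H(n)$, uniformly in the $\mathrm{(G2^*)}$ case because the exponent $n^s$ then depends only on $n$ and $m$.

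With this reformulation in hand, the equivalence is proved in two steps. If $G$ satisfies $\mathrm{(G2)}$, resp.\ $\mathrm{(G2^*)}$, then for any open $H\leq\tilde G$ of index $m$, Lemma \ref{lem:explicit} produces an open $G_0\leq G$ with $(G:G_0)\leq m!$ and $r_p(H)\leq (m!)^2(r_p(G_0)+2)$; finiteness of $r_p(G_0)$, resp.\ a uniform bound on $r_p(G_0)$ over all open subgroups of index $\leq m!$, then yields finiteness of $r_p(H)$, resp.\ a uniform bound over all open $H$ of index $\leq m$, so $\tilde G$ satisfies $\mathrm{(G2)}$, resp.\ $\mathrm{(G2^*)}$. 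Conversely, using the epimorphism $\phi\colon\tilde G\to G$, for an open $L\leq G$ the preimage $H=\phi^{-1}(L)$ is open in $\tilde G$ with $(\tilde G:H)=(G:L)$, and $\phi$ restricts to an epimorphism $H\twoheadrightarrow L$. Since an epimorphism cannot increase the $p$-rank (the maximal elementary abelian $p$-quotient $L/M_p(L)$ is a quotient of $H/M_p(H)$), we get $r_p(L)\leq r_p(H)$, so finiteness, resp.\ uniform boundedness, of the $r_p(H)$ carries over to the $r_p(L)$; hence $\mathrm{(G2)}$, resp.\ $\mathrm{(G2^*)}$, for $\tilde G$ implies the same for $G$.

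The genuinely technical input -- Lemma \ref{lem:explicit}, which controls $p$-ranks of open subgroups of $\tilde G$ via property (2) and the Nielsen--Schreier formula -- is already established, so the remaining work is essentially bookkeeping. The one point that requires a little care is the uniformity in the $\mathrm{(G2^*)}$ case: one must check that all the bounds produced (from Lemma \ref{lem:reducetoprime} and from Lemma \ref{lem:explicit}) depend only on $n$ and $m$, and in particular that passing from index $m$ to index $m!$ via Lemma \ref{lem:explicit} is harmless because $m!$ is again a fixed bound. I expect this to be the main, though modest, obstacle.
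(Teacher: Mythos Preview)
Your proposal is correct and follows essentially the same approach as the paper: both use Lemma~\ref{lem:explicit} together with Lemma~\ref{lem:reducetoprime} and Remark~\ref{rem:rp} for the direction $G\Rightarrow\tilde G$, and the fact that the conditions pass to quotients for the converse. The only cosmetic difference is that you isolate the $p$-rank reformulation of $\mathrm{(G2)}$ and $\mathrm{(G2^*)}$ as a separate preliminary step, whereas the paper carries out the same translation inline.
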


\begin{proof}
If $\tilde{G}$ satisfies $\mathrm{(G2)}$ or $\mathrm{(G2^*)}$, 
then so does its quotient $G$.

Conversely, assume that $G$ satisfies $\mathrm{(G2)}$ and let
$H\leq\tilde{G}$ with $(\tilde{G}:H)\leq m$.
By Lemma \ref{lem:explicit} there exists $G_0\leq G$ with $(G:G_0)\leq m!$ 
such that $r_p(H)$ is bounded in terms of $r_p(G_0)$ and $m$.
In particular, $I_H(p)$ is finite.
By Lemma \ref{lem:reducetoprime} we get for every $n$ that $I_H(n)$ is finite,
so $\tilde{G}$ satisfies $\mathrm{(G2)}$.

If $G$ satisfies even $\mathrm{(G2^*)}$ then 
$I_{G_0}(p)\leq I_G(p,m!)$ is uniformly bounded just in terms of $m$ and $p$, 
hence so is $r_p(G_0)$, and therefore also $I_H(p)$.
Thus, by Lemma \ref{lem:reducetoprime}, $I_H(n)$ is bounded in terms of $m$ and $n$,
so $I_{\tilde{G}}(n,m)<\infty$, which means that $\tilde{G}$ satisfies 
$\mathrm{(G2^*)}$.
\end{proof}

We now have all the ingredients to construct the counterexamples that prove
Theorem~\ref{thm:intro1} and Theorem~\ref{thm:intro2}:

\begin{Proposition}\label{prop:counterex1}
There exists a pseudo-algebraically closed field $F$ of characteristic zero such that
every $F^*\equiv F$ satisfies $\mathrm{(F2)}$, but $F$ does not satisfy 
$\mathrm{(F1)}$.
\end{Proposition}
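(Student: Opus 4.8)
The plan is to realize the group $\tilde{G}$ from the previous section, for a suitable choice of $G$, as an absolute Galois group. First I would fix a non-abelian finite simple group $S$ and an infinite cardinal $\kappa$, and set $G=S^{\kappa}$. By Proposition \ref{lem:simple}, $G$ satisfies $\mathrm{(G2^*)}$ but not $\mathrm{(G1)}$. Passing to the universal Frattini cover $\tilde{G}$, property (1) makes $\tilde{G}$ projective, and Proposition \ref{prop:Frattini} shows that $\tilde{G}$ still satisfies $\mathrm{(G2^*)}$. Moreover $\tilde{G}$ is not small: it has $G$ as a quotient, and a quotient of a small profinite group is again small, since an open subgroup of index $n$ of a quotient pulls back to an open subgroup of index $n$ of the group, and distinct subgroups pull back to distinct ones; hence smallness of $\tilde{G}$ would contradict the non-smallness of $G$.

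Next I would invoke Proposition \ref{prop:LubotzkyDries} with base field $K=\mathbb{Q}(\mu_\infty)$: since $\tilde{G}$ is projective, there is a perfect pseudo-algebraically closed field $F\supseteq\mathbb{Q}(\mu_\infty)$ with $G_F\cong\tilde{G}$. In particular $F$ has characteristic zero and $\mu_\infty\subseteq F$, which puts us exactly in the situation where Propositions \ref{prop:F2G2} and \ref{prop:F2*G2*} apply. Finally, since $G_F\cong\tilde{G}$ satisfies $\mathrm{(G2^*)}$, Proposition \ref{prop:F2*G2*} yields that every $F^*\equiv F$ satisfies $\mathrm{(F2)}$; and since $G_F\cong\tilde{G}$ is not small, Fact \ref{fact:F1G1} shows that $F$ does not satisfy $\mathrm{(F1)}$.

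The one point that requires a little care is the need to have all roots of unity in $F$, which is what lets us translate $\mathrm{(F2)}$ and its uniform variant into the group-theoretic conditions on $G_F$; this is precisely why I would feed $\mathbb{Q}(\mu_\infty)$ rather than $\mathbb{Q}$ into the Lubotzky--van den Dries theorem, which is permissible since that theorem accepts an arbitrary base field. Everything else is just assembling results already established: the substantive content sits in Proposition \ref{lem:simple} and in the preservation statement Proposition \ref{prop:Frattini}, so I do not expect any genuine obstacle here beyond bookkeeping.
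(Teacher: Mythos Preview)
Your proposal is correct and follows essentially the same route as the paper: take $G=S^{\kappa}$, pass to the universal Frattini cover to obtain a projective group still satisfying $\mathrm{(G2^*)}$ but not $\mathrm{(G1)}$, and realize it via Lubotzky--van den Dries over a base field containing $\mu_\infty$. The only cosmetic differences are that the paper fixes $\kappa=\aleph_0$ and uses $K=\mathbb{C}$ rather than $\mathbb{Q}(\mu_\infty)$, and it states the non-smallness of $\tilde{G}$ more tersely; your added justification that quotients of small groups are small is a welcome elaboration.
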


\begin{proof}
Let $S$ be a non-abelian finite simple group, for example $S=A_5$, and let $G=S^{\aleph_0}$.
By Proposition \ref{lem:simple}, $G$ satisfies $\mathrm{(G2^*)}$ 
but not $\mathrm{(G1)}$.
Thus, by Proposition \ref{prop:Frattini}, also $\tilde{G}$ satisfies 
$\mathrm{(G2^*)}$, and, since it has $G$ as a quotient, it does not satisfy 
$\mathrm{(G1)}$.
Let $K$ be any field of characteristic zero that contains all roots of unity, for example $K=\mathbb{C}$.
By Proposition \ref{prop:LubotzkyDries} there exists a field $F\supseteq K$ which is pseudo-algebraically closed and has absolute Galois group $G_F\cong\tilde{G}$,
so all $F^*\equiv F$ satisfy $\mathrm{(F2)}$ by Proposition 
\ref{prop:F2*G2*}, but $F$ does not satisfy $\mathrm{(F1)}$ (Fact \ref{fact:F1G1}).
\end{proof}

\begin{Proposition}\label{prop:counterex2}
There exists a pseudo-algebraically closed field $F$ of characteristic zero that satisfies
$\mathrm{(F2)}$, but some $F^*\equiv F$ does not satisfy $\mathrm{(F2)}$.
\end{Proposition}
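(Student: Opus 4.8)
The plan is to assemble the pieces already in place, in direct parallel with the proof of Proposition~\ref{prop:counterex1}; the only change is that we now feed in the group of Proposition~\ref{prop:G2notG2*} rather than the one of Proposition~\ref{lem:simple}. First I would fix a non-abelian finite simple group $S$ (for instance $S=A_5$) and a prime number $p$, and let $G$ be the product over all perfect extensions of $S$ by $C_p^k$ for all $k\in\mathbb{N}$. By Proposition~\ref{prop:G2notG2*}, this $G$ satisfies $\mathrm{(G2)}$ but not $\mathrm{(G2^*)}$. Passing to the universal Frattini cover, Proposition~\ref{prop:Frattini} yields that $\tilde{G}$ likewise satisfies $\mathrm{(G2)}$ but not $\mathrm{(G2^*)}$, and $\tilde{G}$ is projective by property~(1) listed before Lemma~\ref{lem:explicit}.

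Next I would apply Proposition~\ref{prop:LubotzkyDries} with $K$ any field of characteristic zero containing all roots of unity (say $K=\mathbb{C}$, or more economically $K=\mathbb{Q}(\mu_\infty)$), obtaining a perfect pseudo-algebraically closed field $F\supseteq K$ of characteristic zero with $G_F\cong\tilde{G}$. Since $\mu_\infty\subseteq K\subseteq F$ and $G_F$ satisfies $\mathrm{(G2)}$, Proposition~\ref{prop:F2G2} shows that $F$ satisfies $\mathrm{(F2)}$. On the other hand, since $G_F$ does not satisfy $\mathrm{(G2^*)}$, Proposition~\ref{prop:F2*G2*} shows that it is not the case that all $F^*\equiv F$ satisfy $\mathrm{(F2)}$, i.e.\ there is some $F^*$ elementarily equivalent to $F$ that fails $\mathrm{(F2)}$. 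This is exactly the assertion of the proposition.

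Since the substantive work has already been carried out in the previous sections, I do not expect a genuine obstacle here: the proof is a matter of chaining together Proposition~\ref{prop:G2notG2*}, Proposition~\ref{prop:Frattini}, Proposition~\ref{prop:LubotzkyDries}, Proposition~\ref{prop:F2G2} and Proposition~\ref{prop:F2*G2*}. The only points to keep an eye on are that $\tilde{G}$ is projective (so that Proposition~\ref{prop:LubotzkyDries} is applicable) and that the standing hypothesis $\mu_\infty\subseteq F$ required by Propositions~\ref{prop:F2G2} and~\ref{prop:F2*G2*} is secured by the choice of the base field $K$. The genuinely hard part of the whole construction was isolated earlier as Proposition~\ref{prop:Frattini}, together with the wreath-product computation underlying Lemma~\ref{lem:perfectextensions} and Proposition~\ref{prop:G2notG2*}.
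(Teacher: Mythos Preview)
Your proposal is correct and follows essentially the same approach as the paper's own proof: both choose the group $G$ from Proposition~\ref{prop:G2notG2*}, pass to its universal Frattini cover via Proposition~\ref{prop:Frattini}, realize $\tilde{G}$ as $G_F$ for a PAC field $F\supseteq K$ with $\mu_\infty\subseteq K$ via Proposition~\ref{prop:LubotzkyDries}, and then conclude using Propositions~\ref{prop:F2G2} and~\ref{prop:F2*G2*}. Your explicit remarks that $\tilde{G}$ is projective and that the hypothesis $\mu_\infty\subseteq F$ is secured by the choice of $K$ are apt and make the logic transparent.
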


\begin{proof}
Let $S$ be a non-abelian finite simple group, for example $S=A_5$, let $p$ be any prime number, for example $p=2$,
and let $G$ be the product over all perfect extensions of $S$ by $C_p^k$ for all $k\in\mathbb{N}$.
By Proposition \ref{prop:G2notG2*}, $G$ satisfies $\mathrm{(G2)}$ 
but not $\mathrm{(G2^*)}$.
Thus, by Proposition \ref{prop:Frattini}, also $\tilde{G}$ satisfies 
$\mathrm{(G2)}$ but not $\mathrm{(G2^*)}$.
Let again $K$ be any field of characteristic zero that contains all roots of unity
and apply Proposition \ref{prop:LubotzkyDries} to get a field $F\supseteq K$ which is pseudo-algebraically closed and has absolute Galois group $G_F\cong\tilde{G}$.
By Proposition \ref{prop:F2G2}, $F$ satisfies $\mathrm{(F2)}$,
but by Proposition \ref{prop:F2*G2*} there is some $F^*\equiv F$ 
that does not satisfy $\mathrm{(F2)}$.
\end{proof}

\begin{Remark}
We remark that much more concrete realizations of projective profinite groups are known.
For example, since the groups we constructed have countable rank,
they could be realized as absolute Galois groups of {\em algebraic} extensions of $\mathbb{Q}$.
For instance, if $\mathbb{Q}^{\rm tr}$ denotes the field of totally real algebraic numbers -- the maximal Galois extension of $\mathbb{Q}$ in $\mathbb{R}$ --
then one can find algebraic extensions of $\mathbb{Q}^{\rm tr}(\mu_\infty)$ with the properties of Proposition \ref{prop:counterex1} or Proposition \ref{prop:counterex2},
cf.~\cite[Example 5.10.7]{Jarden}.
\end{Remark}

\section{Henselian fields}

\noindent
Since $\mathrm{(F1)}$ and $\mathrm{(F2)}$ 
are essentially properties of the absolute Galois group,
and every absolute Galois group occurs as the absolute Galois group of a henselian valued field,
it is clear that one can also construct such examples with $F$ henselian:

\begin{Proposition}
There exists a henselian valued field $F$ of characteristic zero that satisfies
$\mathrm{(F2)}$ but not $\mathrm{(F1)}$.
\end{Proposition}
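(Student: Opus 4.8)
The strategy is to combine the group-theoretic constructions of Section 3 with the fact that every projective profinite group arises as the absolute Galois group of a henselian valued field, rather than of a pseudo-algebraically closed field. Concretely, I would take exactly the same group as in Proposition~\ref{prop:counterex1}: let $S$ be a non-abelian finite simple group (say $S=A_5$), put $G=S^{\aleph_0}$, and let $\tilde{G}$ be its universal Frattini cover. By Proposition~\ref{lem:simple}, $G$ satisfies $\mathrm{(G2^*)}$ but not $\mathrm{(G1)}$; since $\mathrm{(G2^*)}$ implies $\mathrm{(G2)}$, in particular $G$ satisfies $\mathrm{(G2)}$ but not $\mathrm{(G1)}$. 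By Proposition~\ref{prop:Frattini}, $\tilde{G}$ still satisfies $\mathrm{(G2)}$, and since $G$ is a quotient of $\tilde{G}$ and $G$ is not small, $\tilde{G}$ is not small either, i.e.\ $\tilde{G}$ fails $\mathrm{(G1)}$.

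The second ingredient is the realization of $\tilde{G}$ over a field containing all roots of unity by a \emph{henselian} valued field. The point is that for any profinite group $\Pi$ and any field $k$, the Laurent-series-type construction (e.g.\ iterated formal power series, or more precisely the fact that if $k$ is a field with $G_k\cong\Pi$ then a suitable henselian valued field with residue field related to $k$ has the same absolute Galois group) produces a henselian valued field whose absolute Galois group is built from $\Pi$; and since $\tilde{G}$ is projective it is itself an absolute Galois group of some field $k_0$ of characteristic zero containing $\mu_\infty$ (e.g.\ via Proposition~\ref{prop:LubotzkyDries}, take $k_0=F_0$ the PAC field realizing $\tilde G$). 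One then passes to a henselian valued field $F$ with $G_F\cong G_{k_0}\cong\tilde G$: for instance $F=k_0((t))$ has absolute Galois group an extension of $G_{k_0}$ by $\hat{\mathbb{Z}}$, which is not quite what we want, so instead I would invoke the standard fact (as already asserted in the sentence preceding the statement, "every absolute Galois group occurs as the absolute Galois group of a henselian valued field") that there is a henselian valued field $F$ of characteristic zero with $G_F\cong\tilde{G}$ exactly. Since $\tilde G$ is projective and $\mu_\infty\subseteq k_0$, one can moreover arrange $\mu_\infty\subseteq F$.

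Finally, translate back to field properties. Since $G_F\cong\tilde{G}$ satisfies $\mathrm{(G2)}$ and $\mu_\infty\subseteq F$, Proposition~\ref{prop:F2G2} gives that $F$ satisfies $\mathrm{(F2)}$. Since $G_F\cong\tilde G$ fails $\mathrm{(G1)}$, Fact~\ref{fact:F1G1} gives that $F$ does not satisfy $\mathrm{(F1)}$. This is exactly the claim.

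The main obstacle is the precise realization step: making sure that $\tilde G$ occurs \emph{on the nose} as the absolute Galois group of a henselian valued field of characteristic zero still containing all roots of unity, rather than merely a group built out of $\tilde G$ by an extension (as $k_0((t))$ would give). I expect the cleanest way to handle this is to cite the known result that every projective profinite group is the absolute Galois group of some henselian valued field of characteristic zero (which can itself be arranged to contain $\mu_\infty$, or one replaces $F$ by a suitable algebraic extension without changing $G_F$), so that $\mathrm{(F2)}$ still transfers via Proposition~\ref{prop:F2G2}. Everything else is a direct rerun of the arguments in Proposition~\ref{prop:counterex1} with "pseudo-algebraically closed" replaced by "henselian valued".
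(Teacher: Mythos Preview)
Your approach is correct and matches the paper's: take the PAC field $k_0$ of Proposition~\ref{prop:counterex1} with $G_{k_0}\cong\tilde{G}$ and then pass to a henselian valued field with the same absolute Galois group. The only point you leave unresolved is the concrete realization step, which the paper handles cleanly: instead of $k_0((t))$ (which, as you note, introduces an unwanted $\hat{\mathbb{Z}}$-factor), take the generalized power series field $F=k_0((\mathbb{Q}))$. This is henselian with residue field $k_0$ and \emph{divisible} value group $\mathbb{Q}$, so by standard ramification theory (e.g.\ \cite[5.2.7, 5.3.3]{EP}) one has $G_F\cong G_{k_0}\cong\tilde{G}$ exactly; moreover $\mu_\infty\subseteq k_0\subseteq F$, so Proposition~\ref{prop:F2G2} and Fact~\ref{fact:F1G1} apply directly.
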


\begin{proof}
Let $F$ be the field constructed in Proposition \ref{prop:counterex1},
and let $F'=F((\mathbb{Q}))$ be the field of generalized power series over $F$ with exponents in $\mathbb{Q}$, 
cf.~\cite[\S4.2]{Efrat}.
Then $F'$ is henselian valued with residue field $F$ and divisible value group $\mathbb{Q}$, see \cite[18.4.2]{Efrat}.
Thus, $G_{F'}\cong G_F$, as follows from \cite[5.2.7 and 5.3.3]{EP}.
Since $F'$ contains all roots of unity,
it satisfies $\mathrm{(F2)}$ but not $\mathrm{(F1)}$, as above.
\end{proof}

In this construction, the property that $F'$ satisfies $\mathrm{(F2)}$ but 
not $\mathrm{(F1)}$ is inherited from the residue field.
We now show that, at least in characteristic 0, 
this is in fact the only way to construct henselian fields with this property,
or, more generally, with properties like in Proposition \ref{prop:counterex1} and Proposition \ref{prop:counterex2}.
In order to do that, we need the following lemma.

\begin{Lemma}\label{lem:henselian}
Let $(F,v)$ be a henselian valued field 
with residue field $Fv$ of characteristic $0$ and value group $\Gamma$, and let $n\in\mathbb{N}$. Then
$$
 |F^\times/(F^\times)^n| = |\Gamma/n\Gamma| \cdot |Fv^\times/(Fv^\times)^n|
$$
holds. 
In particular, if $\mu_n\subseteq F$,
then $I_{G_F}(n)$ is finite if and only if both $[\Gamma:n\Gamma]$ and 
$I_{G_{Fv}}(n)$ are finite.
\end{Lemma}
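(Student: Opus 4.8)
The plan is to prove the formula $|F^\times/(F^\times)^n| = |\Gamma/n\Gamma|\cdot|Fv^\times/(Fv^\times)^n|$ via the short exact sequence
$$
 1\longrightarrow \mathcal{O}_v^\times \longrightarrow F^\times \stackrel{v}{\longrightarrow} \Gamma \longrightarrow 0,
$$
where $\mathcal{O}_v^\times$ is the unit group of the valuation ring. The key input is that, since the residue characteristic is $0$, the group $\mathcal{O}_v^\times$ is $n$-divisible: given a unit $u$, Hensel's lemma applied to $X^n-u$ (whose reduction $X^n-\bar u$ is separable over the residue field of characteristic $0$ and has the root we want after possibly first solving it in $Fv$) produces an $n$-th root in $\mathcal{O}_v^\times$. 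Thus applying the snake lemma to multiplication by $n$ on this sequence, the term $\mathcal{O}_v^\times/(\mathcal{O}_v^\times)^n$ vanishes and one is left with
$$
 1\longrightarrow \mu_n(F)\cap\mathcal{O}_v^\times \longrightarrow \mu_n(F)\longrightarrow \Gamma[n]\longrightarrow F^\times/(F^\times)^n\longrightarrow \Gamma/n\Gamma\longrightarrow 0,
$$
but it is cleaner to argue directly: the map $F^\times/(F^\times)^n\to\Gamma/n\Gamma$ induced by $v$ is surjective with kernel the image of $\mathcal{O}_v^\times$, and that image is $\mathcal{O}_v^\times/((F^\times)^n\cap\mathcal{O}_v^\times)$. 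Since $\mathcal{O}_v^\times$ is $n$-divisible one checks $(F^\times)^n\cap\mathcal{O}_v^\times=(\mathcal{O}_v^\times)^n\cdot(\text{something})$; more precisely an element $x^n\in\mathcal{O}_v^\times$ has $v(x)\in\Gamma[n]\cap n^{-1}\cdot 0$... — to avoid this subtlety I would instead use that the reduction map $\mathcal{O}_v^\times\to Fv^\times$ is surjective with $n$-divisible kernel $1+\mathfrak{m}_v$, so it induces an isomorphism $\mathcal{O}_v^\times/(\mathcal{O}_v^\times)^n\cong Fv^\times/(Fv^\times)^n$, and then the multiplicativity of the cardinalities in the sequence for multiplication-by-$n$ gives the claimed product formula.

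Concretely, the cleanest route is: first show $1+\mathfrak m_v$ is $n$-divisible (Hensel on $X^n-u$ with $u\equiv 1$), hence $\mathcal O_v^\times/(\mathcal O_v^\times)^n\cong Fv^\times/(Fv^\times)^n$ via reduction. Next, from $1\to\mathcal O_v^\times\to F^\times\xrightarrow{v}\Gamma\to 0$ apply the snake lemma to $\cdot n$ and use that $\Gamma$ is torsion-free (being the value group, it is an ordered abelian group) to kill the connecting map $\Gamma[n]\to\mathcal O_v^\times/(\mathcal O_v^\times)^n$; this yields a short exact sequence
$$
 1\longrightarrow \mathcal O_v^\times/(\mathcal O_v^\times)^n\longrightarrow F^\times/(F^\times)^n\longrightarrow \Gamma/n\Gamma\longrightarrow 0,
$$
from which the product formula for cardinalities follows immediately, and substituting the isomorphism from the first step gives the stated equality.

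For the "in particular" clause, assume $\mu_n\subseteq F$. By Remark \ref{rem:rp} (the last sentence, applied to $F$ and to $Fv$, noting $\operatorname{char} Fv=0$ so $\mu_n\subseteq Fv$ as well) we have, for each prime $p\mid n$, that $I_{G_F}(p)$ is finite iff $r_p(G_F)$ is finite iff $|F^\times/(F^\times)^p|$ is finite, and likewise for $Fv$; then Lemma \ref{lem:reducetoprime} (together with Remark \ref{rem:rp}) shows $I_{G_F}(n)$ is finite iff $|F^\times/(F^\times)^n|$ is finite, and similarly for $Fv$. The product formula (applied with $n$, or prime-by-prime) then shows $|F^\times/(F^\times)^n|$ is finite iff both $|\Gamma/n\Gamma|$ and $|Fv^\times/(Fv^\times)^n|$ are finite, which is the assertion, since $[\Gamma:n\Gamma]=|\Gamma/n\Gamma|$.

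The main obstacle I anticipate is handling the torsion carefully: to delete the $\mu_n$ and $\Gamma[n]$ contributions one must use both that residue characteristic $0$ makes $1+\mathfrak m_v$ uniquely $n$-divisible and that value groups are torsion-free, and one should make sure the snake-lemma bookkeeping genuinely produces the clean three-term sequence above rather than a five-term one with leftover torsion; getting this step right (rather than the Hensel's-lemma divisibility, which is routine) is where the care is needed.
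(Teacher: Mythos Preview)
Your argument for the product formula is correct and takes a different, more structural route than the paper. The paper chooses explicit coset representatives $A\subseteq\mathcal O_v$ lifting $\Gamma/n\Gamma$ and $B\subseteq\mathcal O_v^\times$ lifting $Fv^\times/(Fv^\times)^n$, and shows by a direct two-sided count that the products $\{ab:a\in A,\,b\in B\}$ represent $F^\times/(F^\times)^n$; the harder inequality uses Hensel's lemma on a polynomial of the form $X^n-cb/t^n$. Your approach instead packages the same Hensel input as ``$1+\mathfrak m_v$ is $n$-divisible'' (note: your opening line that $\mathcal O_v^\times$ itself is $n$-divisible is wrong, but you correct this in the ``concretely'' paragraph), deduces $\mathcal O_v^\times/(\mathcal O_v^\times)^n\cong Fv^\times/(Fv^\times)^n$, and then applies the snake lemma to $1\to\mathcal O_v^\times\to F^\times\to\Gamma\to 0$, using that $\Gamma$ is torsion-free to collapse the six-term sequence to the clean three-term one. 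Both arguments are fine; yours makes the roles of ``residue characteristic $0$'' and ``$\Gamma$ ordered, hence torsion-free'' more transparent, while the paper's is more elementary and self-contained.

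For the ``in particular'' clause the paper simply cites Lemma~\ref{lem:F2G2}, whereas you go through Remark~\ref{rem:rp} and Lemma~\ref{lem:reducetoprime}. Be aware that your intermediate assertion ``$I_{G_F}(n)$ is finite iff $|F^\times/(F^\times)^n|$ is finite'' is not true for composite $n$: if $G_F$ is pro-$2$ of infinite $2$-rank (which can be realized over a field containing $\mu_\infty$), then $I_{G_F}(6)=0$ while $|F^\times/(F^\times)^6|=\infty$; and Lemma~\ref{lem:reducetoprime} only supplies one implication. For prime $n$---which is the only case actually used in the subsequent proposition---both your route and the paper's go through without difficulty.
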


\begin{proof}
Take $A=\{a_i\}_{i \in I} \subseteq 
\mathcal{O}_v$ such that $\{v(a_i)\}_{i\in I}$
form a system of representatives for $\Gamma/n\Gamma$ and
$B=\{b_i\}_{i \in J} \subseteq \mathcal{O}_v^\times$ such that 
$\{\overline{b_i}\}_{i\in J}$
form a system of representatives for $Fv^\times/(Fv^\times)^n$.

We first show 
$$|F^\times/(F^\times)^n| \geq 
|\Gamma/n\Gamma| \cdot |Fv^\times/(Fv^\times)^n|$$
for any valued field $(F,v)$:
Consider $(a,b)$, $(a',b') \in A \times B$. Assume that
we have $ab \equiv a'b'\textrm{ mod } (F^\times)^n$. Without loss of generality,
$ab= r^n a'b'$ for some $r \in \mathcal{O}_v$.
Then
$$
 v(a)= v(ab) = n v(r)+ v(a'b')=nv(r) + v(a'),
$$
so $a=a'$, as the values of $A$ form a system of representatives for 
$\Gamma/n\Gamma$. Thus,
$b=r^nb'$ holds, so we get 
$\overline{b} \equiv \overline{b'}\mbox{ mod }(Fv^\times)^n$ and hence $b=b'$,
which proves the claim.

On the other hand, take any $x \in F^\times$.
We want to show that there is some $(a,b) \in A 
\times B$ such that we have $xab \in (F^\times)^n$.
Choose $a \in A$ with $v(xa) \in n \Gamma$ and take some $u \in F^\times$
with $v(u^n)=v(xa)$. Then for $c = \frac{xa}{u^n}$ we get $v(c)=0$,
so there is some $b \in B$ with 
$\overline{t}^n = \overline{c}\overline{b}$ for some $t \in F^\times$.
By henselianity (see \cite[4.1.3]{EP}), 
$f(X)=X^n- \frac{cb}{t^n}$ has a zero $\alpha \in F^\times$,
as ${\rm char}(Fv)=0$. 
This
implies $xab = \alpha^n t^n u^n \in (F^\times)^n$. Thus
$|F^\times/(F^\times)^n| \leq |\Gamma/n\Gamma| \cdot |Fv^\times/(Fv^\times)^n|$
holds.

The last part now follows immediately from Lemma \ref{lem:F2G2}. 
\end{proof}

\begin{Proposition}
Let $(F,v)$ be a henselian valued field with residue field $Fv$ and value group $\Gamma$.
Assume that $\mathrm{char}(Fv)=0$ and $\mu_\infty\subseteq F$.
\begin{enumerate}
\item If $[\Gamma:p\Gamma]=\infty$ for some prime $p$, then $F$ 
satisfies neither $\mathrm{(F1)}$ nor $\mathrm{(F2)}$.
\item If $[\Gamma:p\Gamma]$ is finite for all primes $p$, then
\begin{enumerate}
\item $\mathrm{(F1)}$ holds for $F$ if and only if it holds for $Fv$,
\item $\mathrm{(F2)}$ holds for $F$ if and only if it holds for $Fv$, and
\item $\mathrm{(F2)}$ holds for every $K \equiv F$ if and only if 
it holds for every $k \equiv Fv$.
\end{enumerate}
\end{enumerate}
\end{Proposition}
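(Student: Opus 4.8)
The plan is to reduce both $\mathrm{(F1)}$ and $\mathrm{(F2)}$ to the residue field by means of Lemma~\ref{lem:henselian} and its analogue for finite extensions of $F$, using two elementary facts about the value group. First, if $[\Gamma:p\Gamma]<\infty$ for every prime $p$, then $[\Gamma:n\Gamma]<\infty$ for every $n$: writing $n=\prod_p p^{v_p(n)}$ one has $\Gamma/n\Gamma\cong\prod_p\Gamma/p^{v_p(n)}\Gamma$, and $[\Gamma:p^k\Gamma]=[\Gamma:p\Gamma]^k$ because multiplication by $p$ is injective on the torsion-free group $\Gamma$. Second, for every finite extension $E/F$, with $w$ the unique prolongation of $v$ and $\Gamma_E=w(E^\times)$, one has $[\Gamma_E:n\Gamma_E]=[\Gamma:n\Gamma]$ for all $n$; this follows from the snake lemma applied to multiplication by $n$ on $0\to\Gamma\to\Gamma_E\to\Gamma_E/\Gamma\to 0$, since $\Gamma$ and $\Gamma_E$ are torsion-free while the finite group $\Gamma_E/\Gamma$ has $n$-torsion and $n$-cotorsion of equal cardinality. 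I will also use the standard facts that over a henselian field of residue characteristic $0$ every finite extension of $Fv$ is the residue field of a unique finite unramified extension of $F$, of the same degree, and that $\mu_\infty\subseteq F$ forces $\mu_\infty\subseteq Fv$ (lift a primitive root of unity through the residue map by Hensel's lemma). Given all this, part (1) is immediate: if $[\Gamma:p\Gamma]=\infty$ then Lemma~\ref{lem:henselian} with $n=p$ gives $|F^\times/(F^\times)^p|=\infty$, so $F$ fails $\mathrm{(F2)}$, hence also $\mathrm{(F1)}$ by Proposition~\ref{prop:F1F2}.

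Assume now $[\Gamma:p\Gamma]<\infty$ for all $p$. For (b): $F$ satisfies $\mathrm{(F2)}$ if and only if $|E^\times/(E^\times)^n|<\infty$ for every finite $E/F$ and every $n$, and by Lemma~\ref{lem:henselian} for $(E,w)$ this cardinality is $[\Gamma:n\Gamma]\cdot|Ew^\times/(Ew^\times)^n|$ with $[\Gamma:n\Gamma]$ finite; since $\{Ew : E/F\text{ finite}\}$ is exactly the set of finite extensions of $Fv$, this holds if and only if $Fv$ satisfies $\mathrm{(F2)}$. For (c): by Proposition~\ref{prop:F2*G2*} applied to $F$ and to $Fv$ (both contain $\mu_\infty$), the statement is equivalent to ``$G_F$ satisfies $\mathrm{(G2^*)}$ if and only if $G_{Fv}$ does'', and I would prove this with explicit bounds. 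For finite $E/F$ of degree $\le m$, Kummer theory (as in the proof of Lemma~\ref{lem:F2G2}) gives $I_{G_E}(n)\le|E^\times/(E^\times)^n|=[\Gamma:n\Gamma]\cdot|Ew^\times/(Ew^\times)^n|$, and conversely, for finite $L/Fv$ of degree $\le m$, lifting $L$ to an unramified $E/F$ gives $I_{G_L}(n)\le|L^\times/(L^\times)^n|=|E^\times/(E^\times)^n|/[\Gamma:n\Gamma]$. Since $[\Gamma:n\Gamma]$ is a fixed finite number, $\mathrm{(G2^*)}$ on one side bounds, for each prime $p\mid n$, the $p$-ranks of the absolute Galois groups of all the relevant extensions of degree $\le m$ -- via $I_G(p)=(p^{r_p(G)}-1)/(p-1)$ from Remark~\ref{rem:rp} -- and therefore, since a finite abelian group $A$ of exponent dividing $n$ satisfies $|A|\le n^{\max_{p\mid n}\dim_{\mathbb{F}_p}(A/pA)}$, yields a uniform bound on the corresponding $|E^\times/(E^\times)^n|$, hence on $I_{G_F}(n,m)$, respectively on $I_{G_{Fv}}(n,m)$.

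For (a), the implication ``$\mathrm{(F1)}$ for $F$ implies $\mathrm{(F1)}$ for $Fv$'' needs no hypothesis on $\Gamma$: the degree-$n$ extensions of $Fv$ correspond bijectively to the unramified degree-$n$ extensions of $F$, of which there are no more than there are degree-$n$ extensions of $F$ altogether. For the converse I would write an arbitrary degree-$n$ extension $E/F$ as a totally ramified extension of its maximal unramified subextension $E^{\mathrm{ur}}/F$; by $\mathrm{(F1)}$ for $Fv$ there are only finitely many possibilities for $E^{\mathrm{ur}}$ (one for each extension of $Fv$ of degree dividing $n$), so it suffices to bound, for each such $E^{\mathrm{ur}}$ and each $e$, the number of totally ramified extensions of $E^{\mathrm{ur}}$ of degree $e$. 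As $\mathrm{char}(Fv)=0$, such an extension is tamely ramified, and since $\mu_\infty\subseteq E^{\mathrm{ur}}$ it is in fact radical over $E^{\mathrm{ur}}$ -- here Hensel's lemma is used to replace a generator $\gamma$ of prescribed value by $\gamma t^{-1}$ with $t$ a $1$-unit so that $(\gamma t^{-1})^e\in(E^{\mathrm{ur}})^\times$ -- and hence corresponds, by Kummer theory, to a subgroup of order $e$ of $(E^{\mathrm{ur}})^\times/((E^{\mathrm{ur}})^\times)^e$. This group is finite because $[\Gamma:e\Gamma]<\infty$ and $\mathrm{(F1)}$ for $Fv$ implies $\mathrm{(F2)}$ for $Fv$ (Proposition~\ref{prop:F1F2}), so $|E^{\mathrm{ur}}v^\times/(E^{\mathrm{ur}}v^\times)^e|<\infty$; hence there are only finitely many such extensions, and (a) follows by Fact~\ref{fact:F1G1}. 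One may also argue group-theoretically, from the exact sequence $1\to I_F\to G_F\to G_{Fv}\to 1$ of standard ramification theory (see \cite{EP}), in which the inertia group $I_F\cong\varprojlim_n\mathrm{Hom}(\Gamma/n\Gamma,\mu_n)$ is a small profinite group precisely when $[\Gamma:p\Gamma]<\infty$ for all $p$, together with the fact that an extension of a small profinite group by a small one is again small.

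The step I expect to be the real obstacle is this converse half of (a): one must control all finite extensions, not merely abelian ones, which is more than Kummer theory sees directly. The honest work is either the verification that in the present situation -- henselian, residue characteristic $0$, $\mu_\infty$ available -- every totally ramified extension is radical over the base, or, in the group-theoretic reformulation, the closure of smallness under group extensions; for the latter the single non-formal input is that $\mathrm{Hom}(H,A)$, and more generally $H^1(H,A)$, is finite whenever $H$ is a small profinite group and $A$ a finite $H$-module, after which counting the open subgroups of $G_F$ of a given index reduces, via cores, to counting their images in $G_{Fv}$, their intersections with $I_F$, and the finitely many ``complements'' interpolating between them.
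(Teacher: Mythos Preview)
Your proof is correct and follows essentially the same route as the paper: Lemma~\ref{lem:henselian} applied to finite extensions of $F$ handles (1), (2b), and (2c), while (2a) is reduced to the short exact sequence $1\to I_F\to G_F\to G_{Fv}\to 1$ together with closure of smallness under extensions --- the paper phrases this via the explicit semidirect product $G_F\cong\bigl(\prod_p\mathbb{Z}_p^{i_p}\bigr)\rtimes G_{Fv}$ and cites closure of smallness under semidirect products. Two small remarks: your snake-lemma identity $[\Gamma_E:n\Gamma_E]=[\Gamma:n\Gamma]$ is sharper than the paper's bound $[\Delta:p\Delta]\le e\cdot[\Gamma:p\Gamma]$ (either suffices), and in your alternative Kummer argument for (2a) the ``single radical generator'' sketch tacitly assumes $\Gamma_E/\Gamma$ is cyclic --- what you actually need, and what does hold here, is that a totally ramified extension in residue characteristic $0$ with $\mu_\infty$ in the base is abelian of exponent dividing its degree, hence Kummer.
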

\begin{proof}
(1)
Note that since $(F,v)$ is henselian of characteristic $(0,0)$ and 
$F$ contains all roots of unity, Lemma \ref{lem:henselian} applies.
Thus, $[\Gamma:p\Gamma]=\infty$ for some prime $p$ implies 
$|F^\times/(F^\times)^p|=\infty$ and so neither $\mathrm{(F1)}$ nor $\mathrm{(F2)}$ 
hold for $F$.

(2)
For the remainder of the proof, assume that 
$i_p :=[\Gamma:p\Gamma]$ is finite for all primes $p$.
By \cite[5.2.6 and 5.3.3]{EP}, we have
$$G_F \cong \left( \prod_{p \textrm{ prime}} \mathbb Z_p^{i_p} \right) 
\rtimes G_{Fv}.$$
\begin{enumerate}
\item[(a)]
Since $\prod_{p} \mathbb Z_p^{i_p}$ is small
and the class of small profinite groups is closed under semidirect products,
we get that $G_F$ is small if and only if $G_{Fv}$ is small,
i.e.~$F$ satisfies $\mathrm{(F1)}$ if and only if $Fv$ does (Fact \ref{fact:F1G1}).
\item[(b)]
If $G_F$ satisfies $\mathrm{(G2)}$
then so does its quotient $G_{Fv}$.

Conversely, assume that $\mathrm{(G2)}$ holds for $G_{Fv}$. 
Let $E$ be a finite extension of $F$, say $[E:F]=m$. 
Let $\Delta$ denote the value group and $Ev$ the residue field of the unique prolongation of $v$ to $E$. 
Define $f:=[Ev:Fv]$ and $e=[\Delta:\Gamma]$. Then -- by \cite[3.3.4]{EP} -- we have $ef\leq m$. 
For every prime $p$, $I_{G_{Ev}}(p)$ and thus $Ev^\times/(Ev^\times)^p$ is finite,
and $[\Delta:p\Delta] \leq [\Gamma:p\Gamma]\cdot e<\infty$,
so by applying Lemma \ref{lem:henselian} to $E$, we get 
$|E^\times/(E^\times)^p|<\infty$ for every $p$,
which by Remark \ref{rem:rp} and Lemma \ref{lem:reducetoprime} implies that $G_F$ satisfies $\mathrm{(G2)}$.
\item[(c)]
Again, if $G_F$ satisfies $\mathrm{(G2^*)}$, 
then so does its quotient $G_{Fv}$.
For the other direction, assume that $G_{Fv}$ satisfies $\mathrm{(G2^*)}$.
Fix any prime $p$ 
and let $E$ be a finite extension of $F$ with $[E:F]\leq m$ and define $Ev$,
$\Delta$ 
and $e=[\Delta:\Gamma]$ as before. Then,
making repeated use of Remark \ref{rem:rp}, we see that
\begin{align*}
I_{G_E}(p) &= \dfrac{p^{r_p(G_E)}-1}{p-1} 
	=\dfrac{1}{p-1}\cdot(|E^\times/(E^\times)^p|-1)\\
	&\stackrel{\scriptsize\ref{lem:henselian}}{\leq}\dfrac{1}{p-1}\cdot
([\Delta:p\Delta] \cdot |Ev^\times/(Ev^\times)^p|)
	\leq \dfrac{1}{p-1} 
([\Gamma:p\Gamma]\cdot e \cdot p^{r_p(G_{Ev})})\\
	&\leq
[\Gamma:p\Gamma]\cdot m \cdot (I_{G_{Ev}}(p)+1)
	\leq [\Gamma:p\Gamma]\cdot m \cdot (I_{G_{Fv}}(p,m)+1).
\end{align*}
Now Lemma \ref{lem:reducetoprime} implies that for any subgroup 
$H \leq G_F$ of index at most $m$, 
$I_H(n)$ is uniformly bounded in terms of $m$ and $n$,
i.e.~$I_{G_F}(n,m)<\infty$. 
Thus, $\mathrm{(G2^*)}$ 
holds also
for $G_F$, so any $K\equiv F$ satisfies $\mathrm{(F2)}$ (Proposition \ref{prop:F2*G2*}).
\end{enumerate}
\end{proof}

\section*{Acknowledgements}

\noindent
The authors would like to thank Lior Bary-Soroker for suggesting the use of the universal Frattini cover,
and Itay Kaplan, Krzysztof Krupi\'nski and Moshe Jarden for helpful discussions and comments on a previous version.

\end{document}